\DeclareMathAlphabet{\curly}{U}{refs}{m}{n}  
\newtheorem{theorem}{Theorem}[section]
\newtheorem{lemma}[theorem]{Lemma}
\newtheorem{proposition}[theorem]{Proposition}
\newtheorem{corollary}[theorem]{Corollary}
\theoremstyle{definition}
\renewcommand{\leq}{\leqslant}
\renewcommand{\geq}{\geqslant}
\def\R{\mathbb{R}}
\def\C{\mathbb{C}}
\def\Z{\mathbb{Z}}
\def\E{\mathbb{E}}
\def\P{\mathbb{P}}
\def\Q{\mathbb{Q}}
\def\N{\mathbb{N}}
\def\X{\mathbf{X}}
\def\Y{\mathbf{Y}}
\def\ZZ{\mathbf{Z}}
\def\eps{\varepsilon}
\renewcommand{\pmod}[1]{\allowbreak\mkernmu({\operator@font mod}\,\,#1)}
\newcommand{\be}{\begin{equation}}
\newcommand{\ee}{\end{equation}}
\newcommand{\ssum}[1]{\sum_{\substack{#1}}}  
\renewcommand{\(}{\left(}
\renewcommand{\)}{\right)}
\newcommand{\order}{\asymp}      
\newcommand{\fl}[1]{{\ensuremath{\left\lfloor {#1} \right\rfloor}}}
\renewcommand{\le}{\leqslant}
\renewcommand{\ge}{\geqslant}
\newcommand{\cS}{\mathcal{S}}  
\newcommand{\cA}{\mathcal{A}}
\newcommand{\sL}{\mathscr{L}}
\newcommand{\bthet}{\boldsymbol{\theta}}
\newcommand{\T}{\mathbb{T}}
\numberwithin{equation}{section}
\begin{document}

\title{Invariable generation of the symmetric group}



\author{Sean Eberhard}
\address{Mathematical Institute\\
Radcliffe Observatory Quarter\\
Woodstock Road\\
Oxford OX2 6GG\\
England}
\email{sean.eberhard@maths.ox.ac.uk}

\author{Kevin Ford}
\address{Department of Mathematics, 1409 West Green Street, University
of Illinois at Urbana-Champaign, Urbana, IL 61801, USA}
\email{ford@math.uiuc.edu}

\author{Ben Green}
\address{Mathematical Institute\\
Radcliffe Observatory Quarter\\
Woodstock Road\\
Oxford OX2 6GG\\
England}
\email{ben.green@maths.ox.ac.uk}

\thanks{BG is supported by ERC Starting Grant 279438 ``Approximate Algebraic Structure'' and a Simons Investigator Grant.}
\thanks{KF is supported by National Science Foundation grants DMS-1201442 and DMS-1501982.}

\begin{abstract}
We say that permutations $\pi_1,\dots, \pi_r \in \cS_n$ \emph{invariably generate} $\cS_n$ if, no matter how one chooses conjugates $\pi'_1,\dots,\pi'_r$ of these permutations, $\pi'_1,\dots,\pi'_r$ generate $\cS_n$. We show that if $\pi_1,\pi_2,\pi_3$ are chosen randomly from $\cS_n$ then, with probability tending to 1 as $n \rightarrow \infty$, they do not invariably generate $\cS_n$. By contrast it was shown recently by Pemantle, Peres and Rivin that four random elements do invariably generate $\cS_n$ with probability bounded away from zero. We include a proof of this statement which, while sharing many features with their argument, is short and completely combinatorial.
\end{abstract}

\maketitle

\section{Introduction}

Albeit by Dixon's theorem~\cite{dixon1} two random elements $\pi_1,\pi_2$ of the symmetric group $\cS_n$ generate at least the whole alternating group $\cA_n$ with high probability\footnote{We adopt the convention that for a sequence of events $E_n$ in finite probability spaces depending on some parameter $n$, ``$E_n$ occurs with high probability'' means with $\P(E_n)\to 1$ as $n\to\infty$.} as $n\to\infty$, it is less clear how large the group generated by $\pi'_1,\pi'_2$ must be when $\pi'_1$ and $\pi'_2$ are allowed to be arbitrary conjugates of $\pi_1$ and $\pi_2$. Following Dixon~\cite{dixon2} we say that a list $\pi_1,\dots,\pi_r\in\cS_n$ has a property $P$ \emph{invariably} if $\pi_1',\dots,\pi_r'$ has property $P$ whenever $\pi_i'$ is conjugate to $\pi_i$ for every $i$. How many random elements of $\cS_n$ must we take before we expect them to invariably generate $\cS_n$?

Several authors~\cite{davenportsmith,dixon2,kowalskizywina,lp93,musser,PPR} have already considered this question, owing to its connection with computational Galois theory. To briefly explain this connection, suppose we are given a polynomial $f\in\Z[x]$ of degree $n$ with no repeated factors. Information about the Galois group can be gained by reducing $f$ modulo various primes $p$ and factorizing the reduced polynomial $\bar{f}$ over $\Z/p\Z$. By classical Galois theory, if $\bar{f}$ has irreducible factors of degrees $n_1,\dots,n_r$ then the Galois group $G$ of $f$ over $\Q$ has an element with cycle lengths $n_1,\dots,n_r$. Moreover by Frobenius's density theorem, if $G=\cS_n$ then the frequency with which a given cycle type arises is equal to the proportion of elements in $\cS_n$ with that cycle type. Thus if we suspect that $G=\cS_n$ then the number of times we expect to have to iterate this procedure before proving that $G=\cS_n$ is controlled by the expected number of random elements required to invariably generate $\cS_n$.

{\L}uczak and Pyber~\cite{lp93} were the first to prove the existence of a constant $C$ such that $C$ random permutations $\pi_1,\dots,\pi_C\in\cS_n$ invariably generate $\cS_n$ with probability bounded away from zero. Their method does not directly yield a reasonable value of $C$, but recently Pemantle, Peres, and Rivin~\cite{PPR} proved that we may take $C=4$.

\begin{theorem}[Pemantle--Peres--Rivin~\cite{PPR}]\label{fourgen}
If $\pi_1,\pi_2,\pi_3,\pi_4\in\cS_n$ are chosen uniformly at random then the probability that $\pi_1,\pi_2,\pi_3,\pi_4$ invariably generate $\cS_n$ is bounded away from zero.
\end{theorem}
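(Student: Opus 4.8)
The plan is to follow the standard ``no common fixed set'' obstruction for invariable generation. Recall that $\pi_1,\dots,\pi_r$ fail to invariably generate $\cS_n$ precisely when there is some $k$ with $1 \le k \le n-1$ such that each $\pi_i$ fixes (setwise) some $k$-element set; equivalently, each $\pi_i$ has a collection of cycles whose lengths sum to $k$. So to prove the theorem it suffices to show that with probability bounded away from zero, there is \emph{no} such common $k$: for every $k$, at least one of $\pi_1,\pi_2,\pi_3,\pi_4$ has no sub-collection of cycle lengths summing to $k$. Write $A_i \subseteq \{1,\dots,n-1\}$ for the (random) set of ``attainable'' partial sums of the cycle lengths of $\pi_i$; the goal is $\P(A_1 \cap A_2 \cap A_3 \cap A_4 = \varnothing) \gg 1$.

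The key combinatorial input is a dichotomy for the set $A_i$ according to the size of the largest cycle of $\pi_i$. First I would record the elementary fact that if $\pi_i$ has a cycle of length exactly $m$, then $A_i$ and $n - A_i$ both omit the entire interval $(n-m, n) \setminus\{\text{trivial adjustments}\}$ in a suitable sense; more usefully, $A_i$ is contained in $[0,n-m]\cup[m,n]$. So I split the range of $k$ at its midpoint and handle $k \le n/2$ and $k > n/2$ symmetrically (replacing $k$ by $n-k$), reducing to showing $A_1 \cap A_2 \cap A_3$ contains no element of $[1, n/2]$, say, with probability bounded below; the fourth permutation is used to kill the complementary half by the same argument, and a union bound over the two halves costs only a constant factor, so it is enough to beat probability $1/2$ in each half — or, more cleanly, to prove each half-event has probability $\ge 1 - \delta$ for small fixed $\delta$ using all four permutations but only needing three for each half.

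For the half $k \in [1, n/2]$ the strategy is: with probability bounded away from zero, one of the three permutations, say $\pi_1$, has all of its cycles of length $> n/2$ except possibly for cycles that are ``too short to matter'', so that $A_1 \cap [1,n/2]$ is a very sparse set (essentially just the partial sums of the small cycles); and simultaneously, conditionally on the cycle structure, the small-cycle partial sums of $\pi_2$ and $\pi_3$ are ``pseudorandom'' enough that they avoid this sparse set with good probability. Concretely I would use the Poisson approximation for the number of $j$-cycles ($j$ small): the number of $j$-cycles of a random permutation converges to an independent Poisson with mean $1/j$, so the joint distribution of the short-cycle structure is completely understood, and in particular with positive probability $\pi_1$ has a cycle of length $> n/2$ (this happens with probability exactly $\log 2$ in the limit) and no ``medium'' cycles, forcing $A_1 \cap [1,n/2]$ to lie inside a set determined only by the $O(1)$-length cycles of $\pi_1$. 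Then I estimate, for each fixed small $a \in A_1$, the probability that $a \in A_2 \cap A_3$; this is a statement about partial sums of small cycle lengths of $\pi_2$ and $\pi_3$, again Poisson-controlled, and each such probability is $O(1/a)$ or so, summably small, so a union bound over the finitely-many (in distribution) elements $a$ finishes it.

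The main obstacle, and the step requiring the most care, is the union bound over $k$ in the regime where $k$ is \emph{not} small and \emph{not} close to $n$ but where $\pi_1$ does happen to have a moderately large cycle: here $A_1 \cap [1,n/2]$ can fail to be sparse, and one cannot simply enumerate its elements. The fix is to argue probabilistically in $k$ rather than deterministically: for a \emph{fixed} $k$ in the bulk, $\P(k \in A_2)$ is small — of order $n^{-c}$ or $(\log n)^{-c}$ — because having a sub-sum of cycle lengths equal to a prescribed $k$ is a rare event when $k$ is far from $0$ and $n$ (this is where the Erd\H{o}s--type anatomy of the ``divisor-like'' structure of cycle-length subsums enters, and is the technical heart). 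One then takes the intersection over $\pi_1,\pi_2,\pi_3$: either $\pi_1$'s largest cycle is $>n/2$ and the sparse-set argument above applies, or it is $\le n/2$ in which case every relevant $k$ lies in the bulk for \emph{all three} permutations and $\P(k \in A_1\cap A_2 \cap A_3) \le (\text{small})^3$, which is now summable over the $\le n$ choices of $k$. Assembling the two cases and the two halves, with the fourth permutation providing the slack needed to turn ``bounded below'' into ``$1-\delta$'', yields the theorem; I expect the bulk-range subsum estimate $\P(k \in A_i) = o(1)$ uniformly in bulk $k$, with a power-saving or near-power-saving rate, to be the crux of the argument.
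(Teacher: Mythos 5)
Your opening reduction already contains a false equivalence: the absence of a common fixed-set size $k$ with $1\le k\le n-1$ is equivalent to $\pi_1,\dots,\pi_4$ invariably generating a \emph{transitive} subgroup, not all of $\cS_n$ --- the conjugates could, for instance, all lie in $\cA_n$, or in a primitive proper subgroup. The paper handles this with a separate final step: condition $\pi_1$ to be odd (Lemma~\ref{oddperm} shows this does not disturb the small-cycle statistics) and invoke the {\L}uczak--Pyber theorem that a random permutation with high probability lies in no transitive subgroup other than $\cA_n$ and $\cS_n$. Your proposal omits this step entirely.

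More seriously, two of your main probabilistic steps fail. First, the plan to use only three of the permutations for each half of the range of $k$ is doomed: Theorem~\ref{threegen} of this very paper shows that three random permutations share a common fixed-set size $\ell\le k_0(\eps)$ with probability at least $1-\eps$, and such a bounded $\ell$ certainly lies in $[1,n/2]$; so the event $A_1\cap A_2\cap A_3\cap[1,n/2]=\varnothing$ does \emph{not} have probability bounded below, and all four permutations are needed at every scale. Second, your bulk union bound is not summable as claimed: the unconditional probability that a random permutation fixes a set of size $k$ is $k^{-\delta+o(1)}$ with $\delta=1-(1+\log\log 2)/\log 2\approx 0.086$ (see \cite{EFG1}), and even the fourth power $k^{-4\delta}$ summed over $k\le n$ diverges like $n^{1-4\delta}$. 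The essential missing idea is the ``quenched'' decomposition of Lemmas~\ref{single_set} and~\ref{normalfix}: at each dyadic scale one first discards, at cost $O(k^{-\eps^2/3})$, the event that some $\pi_i$ has more than $(1+\eps)\log k$ cycles of length at most $k$; conditioned on a typical count, the probability of fixing a $k$-set improves to $O(k^{\log 2-1+2\eps})$, and the inequality $1+4(\log 2-1)<0$ (equivalently $\log 2<3/4$) is exactly what closes the union bound over $\ell\in(k/2,k]$. Finally, your largest-cycle dichotomy does not rescue the argument: the event that $\pi_1$ has no cycle with length in $(K,n/2]$ has probability about $2K/n=o(1)$ for fixed $K$, so ``one big cycle plus only $O(1)$-length cycles'' is itself an event of vanishing probability, not one bounded below.
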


Incidentally, Pemantle, Peres, and Rivin only prove that $\pi_1,\pi_2,\pi_3,\pi_4$ invariably generate a transitive subgroup of $\cS_n$, but it is little more work to prove the theorem as stated above. We give a somewhat simplified proof of this theorem in Section~\ref{sec:fourgen}. Our main contribution however is the lower bound $C>3$, which will be proved in Section~\ref{sec:threegen}. Thus $C$ can be taken as small as $4$, but no smaller.

\begin{theorem}\label{threegen}
If $\pi_1,\pi_2,\pi_3\in\cS_n$ are chosen uniformly at random then the probability that $\pi_1,\pi_2,\pi_3$ invariably generate a transitive subgroup \textup{(}or, in particular, all of $\cS_n$\textup{)} tends to zero as $n\to\infty$. Equivalently, with probability tending to $1$ there is a positive integer $k<n$ such that $\pi_1,\pi_2,\pi_3$ each have a fixed set of size $k$.
\end{theorem}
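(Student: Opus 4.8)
The plan is to work with the equivalent formulation in the statement. Write $\Sigma(\pi)\subseteq\{0,1,\dots,n\}$ for the set of sizes of fixed sets of $\pi$, equivalently the set of sums of sub-multisets of the cycle-length multiset of $\pi$. The stated equivalence is elementary and deterministic: if $0<k<n$ and $k\in\Sigma(\pi_1)\cap\Sigma(\pi_2)\cap\Sigma(\pi_3)$ then, conjugating each $\pi_i$ so that it fixes a common $k$-set, one obtains conjugates generating an intransitive group; conversely any intransitive group generated by conjugates of the $\pi_i$ has a nontrivial invariant set, whose size lies in $\{1,\dots,n-1\}$ and in all three $\Sigma(\pi_i)$. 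So the theorem reduces to proving that
\[
\Sigma(\pi_1)\cap\Sigma(\pi_2)\cap\Sigma(\pi_3)\cap\{1,\dots,n-1\}\neq\emptyset
\]
with probability tending to $1$. I would approach this by a moment analysis of $X=\sum_{k\in\mathcal K}\prod_{i=1}^3\mathbf 1[k\in\Sigma(\pi_i)]$ for a suitable range $\mathcal K\subseteq\{2,\dots,n-1\}$, say $\mathcal K=\{2,\dots,\lfloor\sqrt n\rfloor\}$; by independence of the $\pi_i$, $\E X=\sum_{k\in\mathcal K}p_k^{\,3}$ and $\E X^2=\sum_{k,k'\in\mathcal K}p_{k,k'}^{\,3}$, where $p_k=\P(k\in\Sigma(\pi))$ and $p_{k,k'}=\P(\{k,k'\}\subseteq\Sigma(\pi))$ for a single uniform $\pi\in\cS_n$.

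The first point needing real work is a lower bound for $p_k$ strong enough to make $\E X=\sum_{k\in\mathcal K}p_k^{\,3}$ diverge, i.e.\ to make $p_k$ decay strictly more slowly than $k^{-1/3}$. The cheap bound $p_k\ge 1/(\min(k,n-k)+1)$, which follows from the exact identities $\E_\pi[\#\{\text{fixed }k\text{-sets}\}]=1$ and $\E_\pi[(\#\{\text{fixed }k\text{-sets}\})^2]=\min(k,n-k)+1$ together with Cauchy--Schwarz, only gives $\sum_kp_k^{\,3}=O(1)$, and no moment inequality will improve it, since the number of fixed $k$-sets is genuinely heavy-tailed. Instead one must analyse the subset-sum structure of a random cycle partition. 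For $k\le\sqrt n$ only cycles of length $\le k$ can sum to $k$, and there are typically about $\log k$ of them; one shows, for instance, that with probability $\gg k^{-1/2}$ their lengths have ``no large gaps'' below $k$, so that $\Sigma(\pi)\supseteq\{0,1,\dots,k\}$ and in particular $k\in\Sigma(\pi)$. Improving the exponent here from $1/2$ to something below $1/3$ --- which is what is needed --- requires combining such a bridged initial interval with its translates by partial sums of the larger cycles (``jumping'' over gaps), and obtaining a clean bound of this kind is the main technical obstacle. (The target decay is consistent with, and indeed forced by, the Pemantle--Peres--Rivin theorem, which in this language amounts to $\sum_kp_k^{\,4}<\infty$.)

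The other difficulty is the variance, or rather the step from positive probability to probability $\to1$. The events $\{k\in\Sigma(\pi)\}$ are strongly positively correlated: if $\pi$ realizes $k$ it can realize $k'=k+d$ by adjoining disjoint cycles summing to $d$, so $p_{k,k'}\gtrsim p_kp_d$, hence the excess $p_{k,k'}^{\,3}-p_k^{\,3}p_{k'}^{\,3}$ is of order $p_k^{\,3}p_d^{\,3}$, and summed over $k$ and $d$ this is of order $(\E X)^2$. Thus a plain second moment only yields $\operatorname{Var}X=O((\E X)^2)$ and $\P(X\ge1)\ge c$ for some positive constant; to get probability $\to1$ one has to squeeze out many nearly-independent chances, for example by localizing to well-separated scales of cycle lengths and showing the failure probability is bounded away from $1$ conditionally on each partial exposure of the cycle data, so that the total failure probability is at most a product of such factors and tends to $0$. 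With the $p_k$ bound and a version of this argument in hand, $\P(X\ge1)\to1$, which by the reduction is the assertion of the theorem. The moral is that $\Sigma(\pi)$, though a sparse subset of $\{0,\dots,n\}$ of size only $2^{O(\log n)}$, hits a given $k$ with probability only a small power of $k$, and does so spread out enough that three independent copies must overlap.
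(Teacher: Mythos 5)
Your reduction to $\Sigma(\pi_1)\cap\Sigma(\pi_2)\cap\Sigma(\pi_3)\cap\{1,\dots,n-1\}\neq\emptyset$ with high probability is correct, and your instinct that one must localize to well-separated scales of cycle lengths to upgrade positive probability to probability $\to 1$ is exactly the mechanism the paper uses (disjoint ranges of cycle lengths give independent Poisson data in the Arratia--Tavar\'e model, so it suffices to succeed at each scale with probability bounded below). But there is a genuine gap precisely at the step you flag as ``the main technical obstacle'': nothing in the proposal establishes, even in outline, that within a single scale the three permutations share a common (localized) subset sum with probability bounded away from zero. Your suggested route --- lower-bound $p_k=\P(k\in\Sigma(\pi))$ so that $\sum_k p_k^3$ diverges, then do a second moment --- cannot deliver this: as you yourself note, the positive correlations stall the second moment at $\P(X\ge 1)\ge c$, and once you localize to a window of cycle lengths you need a \emph{constant} lower bound on the per-scale success probability, which is the whole theorem in miniature rather than a consequence of a divergent sum. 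Moreover the parenthetical claim that Pemantle--Peres--Rivin ``amounts to $\sum_k p_k^4<\infty$'' is false: by \cite{EFG1} one has $p_k=k^{-\delta+o(1)}$ with $\delta\approx 0.086$, so $\sum_k p_k^4$ diverges. The three-versus-four dichotomy is invisible at the level of $p_k$; it lives in the ``quenched'' exponent $1-\log 2$ (the cost of fixing a $k$-set for a permutation with the typical $\approx\log k$ cycles of length $\le k$), via $3(1-\log 2)<1<4(1-\log 2)$, i.e.\ $2/3<\log 2<3/4$.

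For comparison, the paper never lower-bounds $p_k$. Restricting to cycle lengths $j\in(k^\beta,k]$ with $\beta=1-\frac{2}{3\log 2}-0.02>0$ (this is where $\log 2>2/3$ enters), it studies the set $S$ of pairs $(n_1-n_3,n_2-n_3)$ with $n_i$ a localized subset sum of the $i$-th permutation, and shows via the Riesz product $F(\bthet)=\prod_j\bigl(\frac{1+e(j\theta_1)}{2}\bigr)^{X_j}\bigl(\frac{1+e(j\theta_2)}{2}\bigr)^{Y_j}\bigl(\frac{1+e(-j\theta_1-j\theta_2)}{2}\bigr)^{Z_j}$, Cauchy--Schwarz and Parseval that $\int_{\T^2}|F|^2\,d\bthet\ll k^{-2}$ on a high-probability event, whence $|S|\gg k^2$ while $S\subset[-10k,10k]^2$. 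A common value is then completed with probability $\gg 1$ using three further cycles $j_1,j_2,j_3\in(10k,60k]$, one from each permutation, with $(j_1-j_3,j_2-j_3)\in -S$. To repair your outline you need an input of this strength at each scale; aiming at the density of the triple difference set, rather than at $p_k$, is the key idea you are missing.
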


As in our recent paper~\cite{EFG1}, our main tool is the following model for the small-cycle structure of a random permutation; see for example Arratia and Tavar\'e~\cite{AT92}.

\begin{lemma}\label{Poisson}
Let $\X=(X_1,X_2,\dots)$ be a sequence of independent Poisson random variables, where $X_j$ has parameter $1/j$. 
If $c_j$ is the number of cycles of length $j$ in a random permutation $\pi\in\cS_n$, then if $k$ is fixed and $n\to\infty$ the distribution of $(c_1,\dots,c_k)$ converges to that of $(X_1,\dots,X_k)$.
\end{lemma}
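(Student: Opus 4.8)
The plan is to prove this by the method of falling-factorial moments. Writing $(c)_m=c(c-1)\cdots(c-m+1)$, the key input is the exact identity
\[
\E\Big[\prod_{j=1}^{k}(c_j)_{m_j}\Big]=\prod_{j=1}^{k}\frac{1}{j^{m_j}}\qquad\text{whenever }N:=\sum_{j=1}^{k}j\,m_j\le n,
\]
while the left-hand side is $0$ when $N>n$. To establish this I would observe that $\E[\prod_j(c_j)_{m_j}]$ equals $1/n!$ times the number of pairs consisting of a permutation $\pi\in\cS_n$ together with a choice, for each $j$, of an ordered $m_j$-tuple of distinct $j$-cycles of $\pi$. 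Laying the chosen cycles down one at a time, each on previously unused points, there are $\frac{n!}{(n-N)!}\prod_j j^{-m_j}$ ways to place such a disjoint family of labelled cycles inside $\{1,\dots,n\}$ (the falling factorials telescope, and forming a cycle on $\ell$ chosen points accounts for the factor $1/\ell$), and each such family lies among the cycles of exactly $(n-N)!$ permutations; the factorials cancel to leave $\prod_j j^{-m_j}$.

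The right-hand side $\prod_j j^{-m_j}$ is exactly the matching joint factorial moment of the limit: for independent $X_j\sim\Pois(1/j)$ one has $\E[(X_j)_{m_j}]=j^{-m_j}$, hence $\E[\prod_j(X_j)_{m_j}]=\prod_j j^{-m_j}$ by independence, so the joint falling-factorial moments of $(c_1,\dots,c_k)$ agree with those of $(X_1,\dots,X_k)$ for all $n\ge N$. To deduce convergence in distribution I would expand $z_j^{c_j}=\sum_{m_j\ge 0}\binom{c_j}{m_j}(z_j-1)^{m_j}$ (a finite sum, since $c_j\le n$) and multiply out, obtaining
\[
\E\Big[\prod_{j=1}^{k}z_j^{c_j}\Big]=\sum_{m_1,\dots,m_k\ge 0}\Big(\prod_{j=1}^{k}\frac{(z_j-1)^{m_j}}{m_j!}\Big)\E\Big[\prod_{j=1}^{k}(c_j)_{m_j}\Big].
\]
Since $0\le\E[\prod_j(c_j)_{m_j}]\le\prod_j j^{-m_j}$ for every $n$, and $\sum_{m_1,\dots,m_k}\prod_j\frac{|z_j-1|^{m_j}}{m_j!\,j^{m_j}}=\prod_j e^{|z_j-1|/j}<\infty$, dominated convergence lets me pass to the limit termwise, giving $\E[\prod_j z_j^{c_j}]\to\prod_j e^{(z_j-1)/j}=\prod_j\E[z_j^{X_j}]$ for all $z\in[0,1]^k$. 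Pointwise convergence of the probability generating functions of $\Z_{\ge 0}^k$-valued random vectors is equivalent to convergence in distribution, which is the assertion.

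The only place where care is needed is the combinatorial identity of the first paragraph; everything afterwards is the routine moment method. In that count one must keep the ordered-versus-unordered bookkeeping straight — one counts ordered tuples of cycles precisely so as to match falling factorials — and it is worth noting that the identity is \emph{exact} once $n\ge N$, rather than merely asymptotic, so that the limit in the last step is immediate and no error terms need to be tracked.
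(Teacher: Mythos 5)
Your proof is correct. The paper does not actually prove Lemma \ref{Poisson}; it quotes it as a standard fact with a reference to Arratia and Tavar\'e. What you have written is the classical moment-method proof of that fact, and all the steps check out: the counting argument for $\E\bigl[\prod_j (c_j)_{m_j}\bigr]=\prod_j j^{-m_j}$ when $\sum_j jm_j\le n$ is right (ordered tuples of distinct cycles matching falling factorials, the telescoping placement count, and the $(n-N)!$ extensions), the identity $\E[(X_j)_{m_j}]=j^{-m_j}$ for $X_j\sim\Pois(1/j)$ is right, and the passage from exact factorial moments to convergence in distribution via dominated convergence of the probability generating functions is legitimate, since each term is dominated by the summable bound $\prod_j |z_j-1|^{m_j}/(m_j!\,j^{m_j})$ and pointwise convergence of generating functions on $[0,1]^k$ to the generating function of a proper distribution on $\Z_{\ge 0}^k$ implies convergence in distribution. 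Nothing further is needed.
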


The set of fixed-set sizes of a random permutation is thus modeled by the random sumset 
\begin{equation}\label{LX}
\sL(\X) = \left\{\sum_{j\geq 1} jx_j: 0\leq x_j\leq X_j \right\}.
\end{equation}
Thus unsurprisingly the main task in proving Theorem~\ref{fourgen} is to show that
\begin{equation}\label{4-}
\P \big( \sL(\X)\cap\sL(\X')\cap\sL(\X'')\cap\sL(\X''')=\{0\}\big) > 0,
\end{equation}
where $\X',\X'',\X'''$ are independent copies of $\X$. Similarly, Theorem~\ref{threegen} follows almost immediately from
\begin{equation}\label{3-x}
\P \big( \sL(\X)\cap\sL(\X')\cap\sL(\X'') = \{0\} \big) = 0.
\end{equation}
Ultimately, these assertions come down to the inequalities $\log 2 < \frac{3}{4}$ and $\frac{2}{3} < \log 2$ respectively, as we shall see in the course of the proofs.

These questions about permutations have analogues in number theory. Our proof of Theorem~\ref{threegen} is modeled after that of the well-known theorem of Maier and Tenenbaum~\cite{MT} on the \emph{propinquity of divisors}: a random integer $n$ (selected from $\{1,\dots, x\}$ for large $x$) has two distinct divisors $d,d'$ with $d<d'\le 2d$ with high probability (as $x\to \infty$).
In particular, we make heavy use of Riesz products, a device closely related to the sums $\sum_{d | n} d^{i \theta}$ that one sees frequently in the propinquity literature. 

The Maier--Tenenbaum theorem itself corresponds more perfectly with the assertion that with high probability a random permutation $\pi$  has, for some $k$ with $0<k<n$, two different fixed sets of size $k$ (a true statement, but not one we establish here).

The number-theoretic analogue of Theorem \ref{fourgen} is a statement of the following kind: if $x$ is large and if we select four random integers $n_1, n_2, n_3, n_4$ independently at random from $\{1,\dots, x\}$ then, with probability bounded below by an absolute constant, any divisors $d_1|n_1, d_2|n_2, d_3|n_3, d_4|n_4$ should have $\max d_i > 2 \min d_i$. 

The number-theoretic analogue of Theorem \ref{threegen} is a statement of the following kind: if $x$ is large and if we select three random integers $n_1, n_2, n_3$ independently at random from $\{1,\dots, x\}$ then, with probability tending to $1$ as $x \rightarrow \infty$, there exist divisors $d_1|n_1, d_2|n_2, d_3|n_3$ with $\max d_i < 2\min d_i$. 

Both of these number-theoretical statements were established nearly 20 years ago by Raouj and Stef \cite{raouj-stef}. In fact, rather more precise statements are established in that paper.  We thank G\'erald Tenenbaum for bringing this paper
to our attention.

The analogous problem of determining the expected number of random elements
required to invariable generatean arbitrary finite group has been considered
in several recent papers \cite{KLS, kowalskizywina, Luc}.

\subsection*{Notation}
Throughout we use standard $O(\cdot)$ and $o(\cdot)$ notation, as well as the Vinogradov notation $X\ll Y$ to mean $X = O(Y)$.

%
%
%
%
%
%

\section{Four generators are enough}\label{sec:fourgen}

The principal result needed for the proof of Theorem~\ref{fourgen} is the following proposition.

\begin{proposition}\label{fourgen-dyadic}
The following is true uniformly for integers $k,n$ with $1\leq k\leq n/2$. If $\pi_1,\pi_2,\pi_3,\pi_4\in\cS_n$ are chosen uniformly at random, then the probability that there is some $\ell\in(k/2,k]$ such that $\pi_1,\pi_2,\pi_3,\pi_4$ each fix a set of size $\ell$ is $O(k^{-c})$ for some $c>0$.
\end{proposition}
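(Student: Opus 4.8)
The plan is to pass to the random sumset model $\sL(\X)$ attached to Lemma~\ref{Poisson}. Since $\pi$ fixes a set of size $\ell$ exactly when $\ell$ is a sum of a sub-multiset of the cycle lengths of $\pi$, and since cycles of length $>k$ are irrelevant for fixed sets of size $\le k$, the event in the Proposition depends only on the cycle counts $c_1,\dots,c_k$ of the four permutations. For $k\le n^{1-\eps}$ these are within $o(1)$ of independent Poissons $X_1,\dots,X_k$ in total variation, while for $n^{1-\eps}<k\le n/2$ one instead uses that $(c_1,\dots,c_k)$ is $(X_1,\dots,X_k)$ conditioned on the single linear equation $\sum_j jX_j=n$; partitioning according to the value of $\sum_{j\le k}jX_j$ and using the local central limit theorem for $\sum_{j>k}jX_j$ shows that this conditioning changes the relevant probabilities by at most a bounded factor, save on an event of probability $O(k^{-c})$. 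Either way it suffices to bound
\[
\P\big(\sL(\X)\cap\sL(\X')\cap\sL(\X'')\cap\sL(\X''')\cap(k/2,k]\neq\emptyset\big)
\]
by $O(k^{-c})$, where $\X',\X'',\X'''$ are independent copies of $\X$; this is the dyadic form of \eqref{4-}.

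The combinatorial input is a bound on how large $\sL(\X)\cap(k/2,k]$ can be. The number $N=\sum_{j\le k}X_j$ of cycles of length $\le k$ has a Poisson distribution with mean $\log k+O(1)$, so $\P(N>C\log k)\ll k^{-C'}$ with $C'\to\infty$ as $C\to\infty$. On the complementary event, $\sL(\X)\cap[1,k]$ consists of subset sums of at most $C\log k$ positive integers, whence $|\sL(\X)\cap(k/2,k]|\le 2^{C\log k}=k^{C\log 2}$; choosing $C$ slightly larger than $1$ keeps the tail bound $\ll k^{-c}$ while keeping $C\log 2<\tfrac34$, and it is exactly here that the inequality $\log 2<\tfrac34$ is forced. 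Thus, up to an admissible error, we may assume each of the four sumsets meets $(k/2,k]$ in at most $k^{3/4-\eps}$ points.

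It remains to show that four such sets rarely share a common point of $(k/2,k]$. Morally each $\sL^{(i)}\cap(k/2,k]$ behaves like a random subset of $(k/2,k]$ of density about $k^{\log 2-1}$, so the expected number of common points is $\asymp k\cdot k^{4(\log 2-1)}=k^{4\log 2-3}=o(1)$, again by $\log 2<\tfrac34$. To turn this into a proof I would decompose a potential common value as $\ell=d_i+r_i$, where $d_i$ records the cycles of $\pi_i$ of length $>\sqrt k$ used in a representation (few of them, since the cycles of length $\le\sqrt k$ have total mass typically only $O(\sqrt k)$, so the large cycles cannot be skipped) and $r_i\le O(\sqrt k)$ is a subset sum of the small cycles; one then runs a first-moment count over the pairs $(d_i,r_i)$, using that the big-cycle and small-cycle statistics are independent in the Poisson model and that the coefficient sums of $\prod_j(1+x^j)^{X_j}$ telescope to $O(\sqrt k)$, reducing the whole problem to bounding an $\ell^4$-norm of a convolution of two explicit nonnegative sequences.

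The hard part is making the ``random subset'' heuristic rigorous: the obstacle is to control the pointwise representation probabilities $\P(\ell\in\sL(\X))$, and in particular to rule out that some exceptionally divisible $\ell\in(k/2,k]$ is representable with anomalously large probability even after the restriction to a typical cycle structure. Unconditionally this probability is only of order $\ell^{-\delta}$ up to logarithmic factors, with $\delta=1-\tfrac{1+\log\log 2}{\log 2}\approx0.086$ (the Erd\H os constant from the divisor problem), which is far too large for a naive union bound over $\ell\in(k/2,k]$ to yield anything; the gain must come from conditioning on a typical number of cycles, or equivalently from a Fourier/Riesz-product estimate of Maier--Tenenbaum type bounding the relevant sup-norms. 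That control, together with the stochastic-domination step needed to carry the Poisson model all the way to $k=n/2$, is where essentially all the work lies; the rest is a first-moment computation and the numerical fact $4\log 2<3$.
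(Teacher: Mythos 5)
You have correctly identified the shape of the argument (restrict to the ``quenched'' regime of permutations with at most $(1+\eps)\log k$ cycles of length $\le k$, then run a first-moment/union bound over $\ell\in(k/2,k]$ and over the four permutations, with the numerics coming down to $4\log 2<3$), but the two steps that carry all the weight are left unproved, and for one of them you are looking in the wrong place. First, the transfer to the Poisson model cannot be taken for granted uniformly for $1\le k\le n/2$: total variation closeness of $(c_1,\dots,c_k)$ to $(X_1,\dots,X_k)$ genuinely fails when $k\asymp n$, and your proposed fix (conditioning on $\sum_j jX_j=n$ plus a local CLT for $\sum_{j>k}jX_j$) is a substantial unproved project. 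The paper avoids the Poisson model entirely here: Lemma~\ref{single_set} proves directly, by deleting one cycle at a time and applying Cauchy's formula, that for \emph{every} $n\ge k$ the number of cycles of length $\le k$ of a uniform $\pi\in\cS_n$ has Poisson-type tails with parameter $h_k\le 1+\log k$, with the extra factor $e/k$ in front. The Poisson heuristic is only used as motivation.

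Second, and more importantly, the pointwise bound you flag as ``where essentially all the work lies'' --- that a permutation with at most $(1+\eps)\log k$ cycles of length $\le k$ fixes a set of a given size $\ell\le k$ with probability $\ll k^{\log 2-1+2\eps}$ --- is exactly the content of Lemma~\ref{normalfix}, and it does \emph{not} require the Riesz-product/Maier--Tenenbaum machinery you anticipate (that machinery appears only in Section~\ref{sec:threegen}, for the three-permutation lower bound). The paper's proof is elementary: if $\pi$ fixes $X$ with $|X|=\ell$, split $\pi=\pi|_X\cdot\pi|_{[n]\setminus X}$, apply Lemma~\ref{single_set} to each factor with cycle counts $\ell_1+\ell_2=\ell$, and note that the $\binom{n}{\ell}$ choices of $X$ cancel against $\ell!(n-\ell)!/n!$; summing $\frac{(1+\log k)^{\ell_1}}{\ell_1!}\cdot\frac{(1+\log k)^{\ell_2}}{\ell_2!}$ over $\ell_1+\ell_2=\ell$ produces the decisive factor $2^{\ell}$, which for $\ell\le(1+\eps)\log k$ is $k^{\log 2+O(\eps)}$, giving $k^{\log 2-1+2\eps}$ after dividing by the two factors of $k$. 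Your alternative sketch (splitting each representation of $\ell$ into large-cycle and small-cycle parts and bounding an $\ell^4$-norm of a convolution) is not developed far enough to assess, but in any case your worry about ``exceptionally divisible $\ell$'' is dissolved by the splitting argument above: the bound is uniform in $\ell$ because it never looks at the arithmetic of $\ell$, only at how the small-cycle count distributes across the fixed set and its complement. Without Lemma~\ref{normalfix} (or a substitute), your density heuristic $k^{\log 2-1}$ remains a heuristic, and the cardinality bound $|\sL(\X)\cap(k/2,k]|\le k^{C\log 2}$ by itself gives nothing, so the proposal as written does not constitute a proof.
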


We begin with a tool for counting permutations with a given number of cycles of length at most $k$.
By the Poisson model mentioned in the Introduction (Lemma \ref{Poisson}), if $k$ is fixed and $n\to \infty$,
this statistic has distribution approaching that of $X_1+\cdots+X_k$, a Poisson variable with parameter
$h_k=1+\frac12+\cdots + \frac1{k}$.  The next result tells us that the distribution is still approximately 
Poisson uniformly over all choices of parameters $k$ and $n$.

\begin{lemma}\label{single_set}
Let $n,k,\ell$ be integers with $n\ge k\ge 1$ and $\ell\ge 0$.  Select $\pi\in \cS_n$ at random.  Then
\[
 \P (\pi \text{ has exactly } \ell \text{ cycles with length} \le k) \le \frac{e}{k} 
 \frac{(1+\log k)^\ell}{\ell!} \( 1  + \frac{\ell}{1+\log k} \).
\]
In particular if $\ell \ll \log k$ then this is $O\((1 + \log k)^{\ell}/k \ell!\)$, while if $\ell \gg \log k$ then this is $O\((1+\log k)^{\ell-1}/k(\ell-1)!\)$.
\end{lemma}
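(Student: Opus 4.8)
The plan is to count the permutations in question by separating each permutation into its \emph{small part} (the union of the cycles of length $\le k$) and its \emph{large part} (the cycles of length $>k$). Writing $a_t$ for the number of permutations of $[t]$ having exactly $\ell$ cycles, each of length $\le k$, and $c_s$ for the number of permutations of $[s]$ all of whose cycles have length $>k$ (with $c_0=1$), one gets
\[
\P\bigl(\pi\text{ has exactly }\ell\text{ cycles of length}\le k\bigr)=\sum_{t}\frac{a_t}{t!}\cdot\frac{c_{n-t}}{(n-t)!}.
\]
By the exponential formula, $\sum_{t}\frac{a_t}{t!}x^t=\frac{1}{\ell!}A(x)^\ell$ with $A(x)=\sum_{1\le j\le k}x^j/j$; I will write $\alpha_t:=[x^t]A(x)^\ell\ge 0$, so $\alpha_t=0$ unless $\ell\le t\le k\ell$ and $\sum_t\alpha_t=A(1)^\ell=h_k^\ell$, where $h_k=\sum_{1\le j\le k}1/j\le 1+\log k$. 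Writing also $\beta_s:=c_s/s!$ for the probability that a uniformly random element of $\cS_s$ has no cycle of length $\le k$, the identity above becomes $\P=\frac{1}{\ell!}\sum_t\alpha_t\beta_{n-t}$, with $\beta_0=1$.

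The engine of the proof is the uniform bound $\beta_s<1/k$ for every $s\ge 1$, which is where the factor $1/k$ in the conclusion is produced. To prove it I would classify a permutation counted by $c_s$ according to the length $i>k$ of the cycle through a fixed point; since choosing that cycle and the rest independently gives $c_s=\sum_{i=k+1}^{s}\binom{s-1}{i-1}(i-1)!\,c_{s-i}$, dividing by $s!$ and reindexing yields the recursion
\[
s\beta_s=\sum_{0\le j\le s-k-1}\beta_j,
\]
where the terms with $1\le j\le k$ vanish automatically (no permutation of $[j]$ with $j\le k$ can avoid all cycles of length $\le k$). Now strong induction finishes it: $\beta_s=0$ for $s\le k$, while for $s\ge k+1$ one has $s\beta_s=\beta_0+\sum_{1\le j\le s-k-1}\beta_j\le 1+(s-k-1)/k=(s-1)/k<s/k$.

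I also need to control the single coefficient $\alpha_n$, which is the contribution of the term $t=n$ — the event that \emph{all} of $\pi$'s cycles are small. Here is the one place the hypothesis $n\ge k$ is used (and some lower bound on $n$ in terms of $k$ is genuinely needed for the statement to hold). Applying $x\,\frac{d}{dx}$ to $A(x)^\ell$ and noting $xA'(x)=\sum_{1\le j\le k}x^j$ has nonnegative coefficients gives, for $\ell\ge 1$,
\[
t\alpha_t=\ell\sum_{1\le j\le k}[x^{t-j}]A(x)^{\ell-1}\le \ell\,A(1)^{\ell-1}=\ell\,h_k^{\ell-1}\qquad(\text{all }t),
\]
so $\alpha_n\le \ell h_k^{\ell-1}/n\le \ell h_k^{\ell-1}/k$; for $\ell=0$ simply $\alpha_n=0$ since $n\ge 1$.

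To conclude, split off the term $t=n$ and use $\beta_0=1$ and $\beta_s<1/k$ for $s\ge 1$:
\[
\P=\frac{1}{\ell!}\Bigl(\alpha_n\beta_0+\sum_{t<n}\alpha_t\beta_{n-t}\Bigr)\le\frac{1}{\ell!}\Bigl(\frac{\ell h_k^{\ell-1}}{k}+\frac{1}{k}\sum_t\alpha_t\Bigr)=\frac{h_k^{\ell-1}(\ell+h_k)}{k\,\ell!},
\]
and since $h_k\le 1+\log k$ this is at most $\frac{1}{k}\cdot\frac{(1+\log k)^{\ell}}{\ell!}\bigl(1+\frac{\ell}{1+\log k}\bigr)$, which is certainly bounded by the claimed $\frac{e}{k}\cdot\frac{(1+\log k)^{\ell}}{\ell!}\bigl(1+\frac{\ell}{1+\log k}\bigr)$; the $\ell=0$ case is immediate from $\P=\beta_n<1/k\le e/k$. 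The two ``in particular'' assertions then drop out according to whether $\ell/(1+\log k)$ is $O(1)$ or not. The only delicate point in all this is the intrusion of the hypothesis $n\ge k$ into the estimate for $\alpha_n$ (equivalently, the control of the ``all cycles small'' term); the remainder is bookkeeping around the clean recursion for $\beta_s$.
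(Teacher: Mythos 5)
Your proof is correct, and it takes a genuinely different route from the paper's. The paper weights each $\pi\in\cS_n(k,\ell)$ by the sum of its cycle lengths (which is $n$), removes one cycle to pass to permutations of smaller sets with $\ell-1$ or $\ell$ small cycles, and then bounds the resulting sum via Cauchy's formula and the multinomial theorem; the factor $e/k$ there arises as $\frac1n\prod_{k<i\le n}e^{1/i}\le\frac1n e^{1+\log(n/k)}$. You instead factor the permutation into its small-cycle and large-cycle parts via the exponential formula, and the $1/k$ is produced by the standalone estimate $\P(\sigma\in\cS_s\text{ has no cycle of length}\le k)<1/k$ for $s\ge1$, proved by the recursion $s\beta_s=\sum_{0\le j\le s-k-1}\beta_j$; the only boundary case ($t=n$, all cycles small, where $\beta_0=1$) is handled by the derivative bound $t\alpha_t\le\ell h_k^{\ell-1}$ together with $n\ge k$ — and you correctly identify this as the one place that hypothesis enters. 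Your argument yields the slightly sharper constant $1$ in place of $e$, and isolates the clean fact $\beta_s<1/k$, which is of independent interest (it is the sharp order, since $\beta_s\to e^{-h_k}\asymp 1/k$); the paper's argument is perhaps more flexible in that the same cycle-removal identity is reused implicitly elsewhere in that style of counting. All the individual steps check out: the recursion for $c_s$, the vanishing of $\beta_j$ for $1\le j\le k$, the induction giving $s\beta_s\le(s-1)/k$, and the coefficient bound for $\alpha_n$ are each correct.
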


\begin{proof}
Denote by $\cS_n(k,\ell)$ the set of $\pi\in \cS_n$ containing exactly $\ell$ cycles of length at most $k$. Evidently
\[
  n |\cS_n(k,\ell)| = \sum_{\pi \in \cS_n(k,\ell) } \ssum{\sigma|\pi \\ \sigma \text{ a cycle}} |\sigma|.
\]
Here the inner sum is over cycles $\sigma$ which are factors of (i.e., contained in) $\pi$, and $|\sigma|$ denotes the length of $\sigma$.
Write $\pi=\sigma \pi'$, and observe that $\pi'$ has either $\ell-1$ or $\ell$ cycles of length at most $k$, depending on whether $|\sigma|\leq k$ or not. 
Thus $\pi'\in S_{n-|\sigma|}(k,m)$, where $m=\ell-1$ or $m=\ell$,
so
\begin{align*}
  n |\cS_n(k,\ell)|
  &\le \sum_{j=1}^n \sum_{m=\ell-1}^\ell \sum_{\pi'\in \cS_{n-j}(k,m)} \; \ssum{\sigma\in\cS_n, |\sigma|=j \\ \sigma\text{ a cycle}} j\\
  &= \sum_{j=1}^n \sum_{m=\ell-1}^\ell \sum_{\pi'\in \cS_{n-j}(k,m)} \frac{n!}{(n-j)!}.
\end{align*}
Now rearrange the sum according the cycle type $(c_1,\ldots,c_{n})$ of
the permutation $\pi'$, i.e., $\pi'$ has $c_i$ cycles of length $i$ for $1\le i\le n$, and $c_1 + 2c_2 + \dots + nc_n = n -j$ if $\pi' \in \cS_{n-j}$. The well known \emph{Cauchy formula} states that the number of $\pi' \in \cS_{n-j}$ with a given cycle type is $(n- j)!/\prod_i c_i! i^{c_i}$.
It follows that
\begin{align*}
  n |\cS_n(k,\ell)| 
  & \leq n! \sum_{j = 1}^n \sum_{\substack{c_1,\dots, c_n \geq 0 \\ c_1 + 2c_2 + \dots + nc_n = n - j \\ c_1 + \dots + c_k \in \{\ell-1, \ell\} }} \frac{1}{\prod_i c_i! i^{c_i}} \\ 
  & \le n!  \sum_{\substack{c_1,\dots, c_n \geq 0 \\ c_1 + \dots + c_k \in \{\ell-1, \ell\} }} \frac{1}{\prod_i c_i! i^{c_i}}\\
  & = n!  \(\sum_{\substack{c_1,\dots, c_k \geq 0 \\ c_1 + \dots + c_k = \ell - 1 }} \frac{1}{\prod_i c_i! i^{c_i}} + \sum_{\substack{c_1,\dots, c_k \geq 0 \\ c_1 + \dots + c_k = \ell }} \frac{1}{\prod_i c_i! i^{c_i}} \) \sum_{c_{k+1},\dots,c_n\geq 0} \frac1{\prod_{i=k+1}^n c_i! i^{c_i}}\\
  &= n! \( \frac{h_k^{\ell-1}}{(\ell-1)!} + 
 \frac{h_k^\ell}{\ell!}\) \prod_{k<i\le n} e^{1/i},
\end{align*}
where in the last line we used the multinomial theorem. The claimed bound now follows using the inequalities $h_k \leq 1+\log k$ and
\[
 \ssum{k<i\le n} \frac{1}{i} = h_n - h_k \le \log n - \log k + 1. \qedhere
\]
\end{proof}

In our paper \cite{EFG1} we showed that the probability of a random permutation $\pi \in \cS_n$ fixing some set of size $k$ is $k^{-\delta + o(1)}$, where $\delta = 1 - \frac{1 + \log \log 2}{\log 2} \approx 0.086$. As noted in that paper, the main contribution to this estimate comes from rather exceptional permutations with an unexpectedly large number, $\approx \log k/\log 2$, of cycles of length $\leq k$. By contrast a typical permutation has $\approx \log k$ cycles of length $\leq k$. By restricting to this ``quenched'' regime\footnote{The terminology is from \cite{PPR} and apparently comes from statistical physics.} we can establish a much stronger bound.

\begin{lemma}\label{normalfix}
Suppose that $k,n$ are integers with $1\leq k\leq n/2$, $0<\eps \le 1/2$, and choose $\pi\in\cS_n$ uniformly at random. Then the probability that $\pi$ fixes a set of size $k$ and has at most $(1+\eps)\log k$ cycles of length at most $k$ is at most 
$O(k^{\log 2 - 1 + 2\eps})$.
\end{lemma}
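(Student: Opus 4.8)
The plan is to union-bound over the fixed set of size $k$ and exploit the product structure of its stabiliser, reducing everything to two applications of Lemma~\ref{single_set}. Call a cycle \emph{small} if it has length at most $k$, write $L=\lfloor(1+\eps)\log k\rfloor$, and let $Z_\pi$ be the number of small cycles of a permutation $\pi$; the event in question is that $\pi$ fixes a $k$-set and $Z_\pi\le L$. I may assume $L\ge1$, since a permutation fixing a set of size $k\ge1$ has at least one small cycle, so for $L=0$ the probability is $0$. If $\pi$ fixes a set $A$ with $|A|=k$ then $\pi=\sigma\tau$ with $\sigma\in\cS_A$, $\tau\in\cS_{A^c}$, and since every cycle of $\sigma$ is small we have $Z_\pi=Z_\sigma+Z_\tau$, where $Z_\sigma$ counts all cycles of $\sigma$. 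Conditionally on $\pi(A)=A$ (an event of probability $k!(n-k)!/n!$) the factors $\sigma=\pi|_A$ and $\tau=\pi|_{A^c}$ are independent and uniform, so summing over the $\binom nk$ sets $A$ — whereby $\binom nk\cdot k!(n-k)!/n!=1$ — yields
\[
\P\big(\pi\text{ fixes a }k\text{-set and }Z_\pi\le L\big)\ \le\ \P\big(Z_\sigma+Z_\tau\le L\big),
\]
with $\sigma\in\cS_k$ and $\tau\in\cS_{n-k}$ independent and uniform.

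The next step is to bound the right-hand side. Expanding over $\ell=j+m\le L$ and applying Lemma~\ref{single_set} to each factor — once with the lemma's parameter ``$n$'' equal to $k$ (legitimate since $k\ge k\ge1$) to bound $\P(Z_\sigma=j)$, and once with ``$n$'' equal to $n-k\ge k$ (legitimate since $k\le n/2$) to bound $\P(Z_\tau=m)$ — gives, after noting that the correction factors $1+\tfrac{j}{1+\log k}$ and $1+\tfrac{m}{1+\log k}$ are $O(1)$ on the range $j,m\le L\le(1+\eps)(1+\log k)$ and using $\sum_{j+m=\ell}1/(j!\,m!)=2^\ell/\ell!$, a bound of the shape
\[
\P\big(Z_\sigma+Z_\tau\le L\big)\ \ll\ \frac1{k^2}\sum_{\ell\le L}\frac{\big(2(1+\log k)\big)^\ell}{\ell!}.
\]

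Everything now hinges on this partial exponential sum, which I would treat as a large-deviation tail. With $y=2(1+\log k)$ one checks $L\le\tfrac{1+\eps}2 y\le\tfrac34 y$, so the summands $y^\ell/\ell!$ increase up to $\ell=L$ and the sum is $\ll y^L/L!\le(ey/L)^L$ by Stirling. Since $x\mapsto(ey/x)^x$ is increasing on $(0,y)$ and $(1+\eps)\log k\in[L,y]$, I may replace $L$ by $(1+\eps)\log k$; a short computation then gives $(ey/L)^L\ll k^{(1+\eps)(1+\log2-\log(1+\eps))}$, so the target probability is $O\big(k^{g(\eps)}\big)$ with $g(\eps)=-2+(1+\eps)(1+\log2-\log(1+\eps))$. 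Finally $g(0)=\log2-1$, while $g'(\eps)=\log\tfrac2{1+\eps}$ lies in $(0,2)$ for $0<\eps\le\tfrac12$; hence $g(\eps)\le\log2-1+2\eps$, which is the claim.

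The only genuinely substantive point is this last step: forcing $Z_\pi\le(1+\eps)\log k$ pushes us into the tail of a Poisson-type sum of parameter $y\approx2\log k$, whose mass is concentrated at the extreme index $\ell\approx(1+\eps)\log k$, and the factor $2^{(1+\eps)\log k}=k^{(1+\eps)\log2}$ produced there is exactly where $\log 2$ enters the exponent — indeed one gets the slightly stronger exponent $\log2-1+(\log2)\eps$, and the slack up to $2\eps$ in the statement is not something I would try to use. Everything preceding it — the union bound, the two invocations of Lemma~\ref{single_set}, the binomial identity — is routine, the only things to check being the legitimacy of both applications of Lemma~\ref{single_set} (needing $k\ge k\ge1$ and $k\le n/2$ respectively) and the elementary inequalities controlling the tail.
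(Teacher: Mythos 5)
Your proposal is correct and follows essentially the same route as the paper: restrict $\pi$ to the fixed $k$-set and its complement, apply Lemma~\ref{single_set} to each factor, use $\sum_{j+m=\ell}1/(j!\,m!)=2^{\ell}/\ell!$, and bound the truncated exponential sum by its top term to get the exponent $\log 2-1+O(\eps)$. The only differences are cosmetic: you phrase the first step as a union bound with conditional independence rather than the paper's direct count, and you carry out the final tail/monotonicity estimate in slightly more detail.
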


\begin{proof}
Fix $\ell \leq (1+\eps)\log k$ and consider permutations $\pi$ with exactly $\ell$ cycles of length at most $k$. If $\pi$ fixes some set $X$, $|X| = k$, write $\pi_1 = \pi |_X$ and $\pi_2 = \pi |_{[n] \setminus X}$ for the induced permutations on $X$ and its complement. Then $\pi_1$ has $\ell_1$ cycles of length $\leq k$, and $\pi_2$ has $\ell_2$ cycles of length $\leq k$, where $\ell_1 + \ell_2 = \ell$. By Lemma~\ref{single_set} the number of such $\pi$, for a given choice of $X$ and $\ell_1, \ell_2$, is bounded by a constant times
\[ \frac{(1 + \log k)^{\ell_1}}{k \ell_1!} k! \cdot \frac{(1 + \log k)^{\ell_2}}{k \ell_2!} (n - k)!,\] which means that the probability we are interested in is bounded by a constant times
\[
	\sum_{\ell_1+\ell_2=\ell} \frac{1}{k^2} \frac{(1+\log k)^{\ell}}{\ell_1!\ell_2!} = 
	\frac{2^\ell (1+\log k)^{\ell}}{k^2 \ell! }.
\]
By summing over all $\ell\le \ell_0=\lfloor (1+\eps)\log k \rfloor$ we get the bound
\begin{align*}
\frac{1}{k^2}\sum_{\ell\leq(1+\eps)\log k} \frac{2^{\ell}(1+\log k)^{\ell}}{\ell!} 
 &\ll \frac{1}{k^2} \frac{2^{\ell_0} (1+\log k)^{\ell_0}}{\ell_0!}\\
 &\ll \frac{1}{k^2} (2e/(1+\eps))^{(1+\eps)\log k}\\
 &\ll \frac{1}{k^{1 - \log 2 -2 \eps}}.\qedhere
\end{align*}
\end{proof}

\begin{proof}[Proof of Proposition~\ref{fourgen-dyadic}] Let $\eps >0$ be small and fixed. First we will use Lemma~\ref{single_set} to bound the probability that one of $\pi_1,\pi_2,\pi_3,\pi_4$ has more than $\ell_0=\fl{(1+\eps)\log k}$ cycles of length at most $k$. By that lemma, for each $\ell\geq\ell_0$, the probability that $\pi_1$ has $\ell$ cycles of length at most $k$ is bounded by
\[
  O\(\frac{(1+\log k)^{\ell-1}}{k(\ell-1)!}\),
\]
so the probability that $\pi_1$ has more than $\ell_0$ cycles is bounded by a constant times
\[
  \sum_{\ell > \ell_0} \frac{(1+\log k)^{\ell-1}}{k(\ell-1)!} \ll \frac{(1+\log k)^{\ell_0-1}}{k(\ell_0-1)!} \ll \frac1k \(\frac{e(1+\log k)}{\ell_0-1}\)^{\ell_0-1} \ll \frac1k \(\frac{e}{1+\eps}\)^{(1+\eps)\log k}.
\]
Now by a Taylor expansion of $-1+(1+\eps)\log(e/(1+\eps))$, this is bounded by $O(k^{-\eps^2/3})$ if $\eps\le \frac12$. Thus the probability that one of $\pi_1,\pi_2,\pi_3,\pi_4$ has more than $\ell_0$ cycles of length at most $k$ is also bounded by $O(k^{-\eps^2/3})$.

On the other hand, by Lemma~\ref{normalfix}, for each $\ell\in (k/2,k]$ the probability that $\pi_i$ has at most $(1+\eps)\log k$ cycles of length at most $k$ and fixes a set of size $\ell$ is at most $k^{\log 2 - 1 + 2\eps}$. Thus the probability that $\pi_1,\pi_2,\pi_3,\pi_4$ each have at most $(1+\eps)\log k$ cycles of length at most $k$ and each fix a set of the same size $\ell$ for some $\ell\in(k/2,k]$ is at most $k^{1+4(\log 2 - 1+2\eps)}$. Since $1+4(\log 2 - 1)<0$, we have $1+4(\log 2 - 1 + 2\eps) < 0$ if $\eps$ is small enough ($\eps=1/40$ works), and so the theorem holds with
\[
  c = \min(\eps^2/3, -1-4(\log 2 - 1 + 2\eps)).\qedhere
\]
\end{proof}

An immediate corollary of Proposition \ref{fourgen-dyadic} is obtained by fixing $k$, letting $n\to \infty$, and recalling the Poisson model (Lemma \ref{Poisson}) and the definition~\eqref{LX} of $\sL(\X)$ .

\begin{corollary}\label{smallk}
 For any $k\ge 2$, the probability that $\sL(\X)\cap\sL(\X')\cap\sL(\X'')\cap\sL(\X''')$ contains an
 integer $\ell \in (k/2,k]$ is $O(k^{-c})$, for some $c>0$.
\end{corollary}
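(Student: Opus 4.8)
The plan is to deduce Corollary~\ref{smallk} from Proposition~\ref{fourgen-dyadic} by a limiting argument, transferring the uniform bound on permutations to the Poisson model via Lemma~\ref{Poisson}. First I would observe that for fixed $k$, membership of an integer $\ell \in (k/2, k]$ in $\sL(\X)$ depends only on the variables $X_1, \dots, X_k$, since any representation $\ell = \sum_j j x_j$ with $0 \le x_j \le X_j$ can only use indices $j \le \ell \le k$. Hence the event ``$\sL(\X) \cap \sL(\X') \cap \sL(\X'') \cap \sL(\X''')$ contains some $\ell \in (k/2, k]$'' is a function of the finitely many coordinates $(X_1, \dots, X_k)$ and their three independent copies, i.e.\ of a random vector in a fixed finite-dimensional (countable) space, and it is in fact determined by whether these coordinates lie in a certain finite set of ``small'' configurations (since to represent $\ell \le k$ one never needs $x_j > k$, so only the truncations $\min(X_j, k)$ matter).

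Next I would invoke Lemma~\ref{Poisson}: as $n \to \infty$ with $k$ fixed, the joint law of the cycle counts $(c_1, \dots, c_k)$ of a uniform random $\pi \in \cS_n$ converges to that of $(X_1, \dots, X_k)$. By independence of the four permutations, the joint law of the four cycle-count vectors converges to that of the four independent copies $(\X, \X', \X'', \X''')$ restricted to the first $k$ coordinates. The set of fixed-set sizes of $\pi$ in the range $(k/2, k]$ is exactly determined by $(c_1, \dots, c_k)$ in the same way that the relevant part of $\sL(\X)$ is determined by $(X_1, \dots, X_k)$: $\ell$ is a fixed-set size of $\pi$ iff $\ell = \sum_j j a_j$ for some $0 \le a_j \le c_j$. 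Therefore the event in Proposition~\ref{fourgen-dyadic} (that there is $\ell \in (k/2, k]$ fixed by all four permutations) is the pullback, under the convergent map $\pi \mapsto (c_1, \dots, c_k)$, of exactly the event appearing in Corollary~\ref{smallk}.

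Since this event is a cylinder event depending on finitely many integer-valued coordinates — equivalently, since the indicator is a bounded continuous (indeed locally constant) function on the relevant discrete product space — convergence in distribution gives convergence of probabilities:
\[
  \P\bigl(\text{some } \ell \in (k/2, k] \text{ lies in } \sL(\X) \cap \sL(\X') \cap \sL(\X'') \cap \sL(\X''')\bigr)
  = \lim_{n \to \infty} \P\bigl(\exists \ell \in (k/2, k] : \pi_1, \pi_2, \pi_3, \pi_4 \text{ each fix a set of size } \ell\bigr).
\]
By Proposition~\ref{fourgen-dyadic} every term on the right is $O(k^{-c})$ with the implied constant and $c > 0$ independent of $n$, so the limit — and hence the Poisson-model probability — is also $O(k^{-c})$. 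This is exactly the assertion of Corollary~\ref{smallk}.

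The only genuinely delicate point is justifying the interchange of limit and probability, i.e.\ making precise that the event is a ``cylinder'' event so that weak convergence of $(c_1, \dots, c_k)$ to $(X_1, \dots, X_k)$ suffices; this is where one must note that representing $\ell \le k$ never requires a summand $j x_j$ with $j > k$, nor a coefficient $x_j$ exceeding $k$, so only finitely many bounded coordinates are ever consulted. Once that observation is in place the argument is essentially immediate, and indeed the corollary is stated in the excerpt as an ``immediate corollary,'' so I expect the write-up to be no more than a sentence or two making exactly these remarks.
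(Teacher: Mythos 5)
Your proposal is correct and is exactly the paper's argument: the paper obtains Corollary~\ref{smallk} by fixing $k$, letting $n\to\infty$, and invoking Lemma~\ref{Poisson} together with the definition~\eqref{LX} of $\sL(\X)$, precisely the cylinder-event limiting argument you describe (the paper simply states it in one sentence rather than spelling out the details). No gaps.
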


\emph{Remark.} If one wished to prove only this, we could substitute Lemma \ref{single_set} with a corresponding bound for $\P(X_1 + \dots + X_k \leq \ell)$, which follows very quickly from the fact that $X_1 + \dots + X_k$ is Poisson with parameter $h_k$. 

\begin{corollary}\label{fourint}
$\sL(\X)\cap\sL(\X')\cap\sL(\X'')\cap\sL(\X''')$ is almost surely finite, and equal to $\{0\}$ with positive probability.
\end{corollary}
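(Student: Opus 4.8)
The plan is to derive both halves of the corollary from Corollary~\ref{smallk}. Write $J=\sL(\X)\cap\sL(\X')\cap\sL(\X'')\cap\sL(\X''')$; recall $0\in J$ always, and Corollary~\ref{smallk} states that $\P\big(J\cap(k/2,k]\ne\emptyset\big)=O(k^{-c})$ for all $k\ge2$. For the finiteness assertion, observe that since $J\subseteq\Z_{\ge0}$, $J$ is finite precisely when it meets only finitely many of the dyadic blocks $(2^{m-1},2^m]$, $m\ge1$. By Corollary~\ref{smallk} with $k=2^m$, the probability that $J$ meets $(2^{m-1},2^m]$ is $O(2^{-mc})$; as $\sum_m2^{-mc}<\infty$, the first Borel--Cantelli lemma shows that almost surely only finitely many blocks are met, so $J$ is almost surely bounded, in particular finite.

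For the second assertion I would fix a large power of two $K=2^{M_0}$ and condition on the event $Z$ that none of $\X,\X',\X'',\X'''$ has a cycle of length $\le K$, i.e.\ $X_j=X_j'=X_j''=X_j'''=0$ for all $j\le K$; this has probability $\P(Z)=e^{-4h_K}>0$. On $Z$ every nonzero element of each of $\sL(\X),\dots,\sL(\X''')$ exceeds $K$, so $J\cap[1,K]=\emptyset$ automatically, and it remains to show that conditionally on $Z$ one still has $J\cap(K,\infty)=\emptyset$ with positive probability. Conditionally on $Z$ the coordinates $(X_j,X_j',X_j'',X_j''')_{j>K}$ keep their (independent Poisson) distributions, and coupling each conditioned variable below an ordinary Poisson-model variable --- agreeing above $K$, with freshly drawn coordinates $\le K$ --- only enlarges the corresponding sumset, hence only enlarges $J$; so Corollary~\ref{smallk} still gives that the conditional probability of $J$ meeting $(2^{m-1},2^m]$ is $O(2^{-mc})$ for each $m$. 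Summing over the blocks making up $(K,\infty)$ yields $\P\big(J\cap(K,\infty)\ne\emptyset\mid Z\big)=O(K^{-c})$, which is $<1$ once $M_0$ is large enough, and hence $\P(J=\{0\})\ge\P(Z)\big(1-O(K^{-c})\big)>0$. Equivalently, one may write $\{J=\{0\}\}$ as the intersection of the two decreasing events $\{J\cap[1,K]=\emptyset\}$ and $\{J\cap(K,\infty)=\emptyset\}$, which are positively correlated by the Harris--FKG inequality and each of positive probability.

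The one step that is not entirely routine is this reduction. The point is that conditioning on the absence of short cycles costs only the harmless positive factor $e^{-4h_K}$, while leaving a tail event to which Corollary~\ref{smallk} still applies, via the monotonicity $\sL(\Y)\subseteq\sL(\X)$ whenever $\Y\le\X$ coordinatewise. One genuinely needs this positive-correlation structure and not a union bound: the crude estimate $\P(J=\{0\})\ge\P\big(J\cap[1,K]=\emptyset\big)-\P\big(J\cap(K,\infty)\ne\emptyset\big)$ is useless here, because the only obvious lower bound $e^{-4h_K}$ for the first term is of order $K^{-4}$, far smaller than the $O(K^{-c})$ bound on the second (the exponent $c$ supplied by Corollary~\ref{smallk} being tiny), so one must multiply the two probabilities rather than subtract. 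Everything else --- the Borel--Cantelli step, the computation $\P(Z)=e^{-4h_K}$, and summing a geometric series --- is immediate.
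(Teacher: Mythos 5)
Your proof is correct and follows essentially the same route as the paper: both halves rest on Corollary~\ref{smallk} summed over dyadic blocks, and the positive-probability step is obtained by combining the event that the small coordinates vanish with the independent tail event, using exactly the monotonicity/coupling observation ($\sL(\Y)\subseteq\sL(\X)$ for $\Y\le\X$) that the paper uses to verify its FKG step directly. The only (immaterial) differences are that you zero out the small coordinates of all four sequences rather than just one, and you phrase the conclusion as a conditional probability rather than as a correlation inequality.
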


\begin{proof}
Let $F_k$ be the event that
\[
  \sL(\X)\cap\sL(\X')\cap\sL(\X'')\cap\sL(\X''')\cap(k,\infty)
\]
is nonempty. By applying Corollary~\ref{smallk} with $k$ replaced by $2^jk$, $j\in \N$, and summing the geometric series, we obtain $\P(F_k) \ll k^{-c}$ for $k\geq 1$. In particular $\P(F_k)\to 0$, so $\P\(\bigcap F_k\) = 0$, so the first part of the corollary holds. For the second part, fix $k_0$ such that $\P(F_{k_0})<1$. Then
\begin{align*}
  \P\(\sL(\X)\cap\sL(\X')\cap\sL(\X'')\cap\sL(\X''') = \{0\}\)
  & \geq \P(X_j = 0~\text{for all $j\leq k_0$, and}~F_{k_0}^c)\\
  & \geq \P(X_j = 0~\text{for all}~j\leq k_0)\,\P(F_{k_0}^c)\\
  & > 0.
\end{align*}
The second inequality here is a simple case of the FKG inequality \cite[Theorem 1.19]{TV}. To see the inequality directly, define $\X^*=(X^*_1,X^*_2,\dots)$ by putting $X^*_j=X_j$ if $j>k_0$ and $X^*_j=0$ if $j\leq k_0$, and let $F_{k_0}^*$ be the event that
\[
	\sL(\X^*)\cap\sL(\X')\cap\sL(\X'')\cap\sL(\X''')\cap(k_0,\infty)
\]
is nonempty. Clearly $F_{k_0}^*$ implies $F_{k_0}$, so
\begin{align*}
\P(X_j = 0~\text{for all $j\leq k_0$, and}~F_{k_0}^c)
  & = \P(X_j = 0~\text{for all}~j\leq k_0,~\text{and}~F_{k_0}^{*c})\\
  & = \P(X_j = 0~\text{for all}~j\leq k_0)\,\P(F_{k_0}^{*c})\\
  &\geq \P(X_j = 0~\text{for all}~j\leq k_0)\,\P(F_{k_0}^c).\qedhere
\end{align*}
\end{proof}


Shortly we will complete the proof of Theorem~\ref{fourgen}. In the proof, we will need a trick to deal with the possibility that $\pi_1,\pi_2,\pi_3, \pi_4 \in \cA_n$. The following lemma is helpful in this regard. It shows that random even and random odd permutations have the same small-cycle structure
as random permutations with unconstrained parity (Lemma \ref{Poisson}).

\begin{lemma}\label{oddperm}
Let $\pi\in\cS_n$ be a random even permutation, and let $c_j(\pi)$ be the number of cycles of length $j$.  Fix $k\in \N$. 
Then as $n\to\infty$ the distribution of $(c_1(\pi),\dots,c_k(\pi))$ converges to that of $(X_1,\dots,X_k)$. The same is true if $\pi$ is a random odd permutation.
\end{lemma}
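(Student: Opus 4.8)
The goal is to show that conditioning a uniform random permutation of $\cS_n$ on having a fixed sign does not disturb the limiting joint distribution of the short-cycle counts $(c_1,\dots,c_k)$, which remains that of the independent Poissons $(X_1,\dots,X_k)$ from Lemma~\ref{Poisson}. The plan is to use the classical fact that the sign of $\pi$ is determined by $n$ together with the number of \emph{even-length} cycles: writing $\sigma(\pi) = \sum_{j\ge 1} c_{2j}(\pi)$ for the number of even cycles, one has $\operatorname{sgn}(\pi) = (-1)^{n - (\text{total number of cycles})} = (-1)^{\sigma(\pi)}\cdot(\text{a factor depending only on the odd-cycle counts and }n)$; more cleanly, a cycle of length $\ell$ is an odd permutation iff $\ell$ is even, so $\operatorname{sgn}(\pi) = (-1)^{\sigma(\pi)}$. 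Hence $\pi$ is even iff $\sigma(\pi)$ is even, and the event we are conditioning on is precisely $\{\sigma(\pi)\equiv 0 \pmod 2\}$.

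First I would fix $k$ and a target tuple $(a_1,\dots,a_k)$ of nonnegative integers, and consider $\P\big(c_1=a_1,\dots,c_k=a_k \mid \pi \text{ even}\big) = \P(c_1=a_1,\dots,c_k=a_k \text{ and }\sigma(\pi)\text{ even}) / \P(\sigma(\pi)\text{ even})$. The denominator is $\tfrac12$ exactly, for every $n\ge 2$, since $|\cA_n| = |\cS_n|/2$. For the numerator, the standard tool is a generating-function / character sum: $\mathbf{1}[\sigma(\pi)\text{ even}] = \tfrac12(1 + (-1)^{\sigma(\pi)})$, so the numerator equals $\tfrac12\P(c_1=a_1,\dots,c_k=a_k) + \tfrac12\,\E\big[(-1)^{\sigma(\pi)}\mathbf{1}[c_1=a_1,\dots,c_k=a_k]\big]$. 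The first term, by Lemma~\ref{Poisson}, tends to $\tfrac12\P(X_1=a_1,\dots,X_k=a_k)$. So the whole argument reduces to showing that the ``twisted'' expectation $\E\big[(-1)^{\sigma(\pi)}\mathbf{1}[c_1=a_1,\dots,c_k=a_k]\big] \to 0$ as $n\to\infty$.

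To handle the twisted expectation I would split $\sigma(\pi) = \sigma_{\le k}(\pi) + \sigma_{>k}(\pi)$ into even cycles of length $\le k$ and those of length $> k$. On the event $\{c_1=a_1,\dots,c_k=a_k\}$, the first piece $\sigma_{\le k}$ is a \emph{fixed} constant, $s := \sum_{2j\le k} a_{2j}$, so $(-1)^{\sigma(\pi)} = (-1)^s (-1)^{\sigma_{>k}(\pi)}$ and the twisted expectation is $(-1)^s\,\E\big[(-1)^{\sigma_{>k}(\pi)}\mathbf{1}[c_1=a_1,\dots,c_k=a_k]\big]$. The cleanest way to show this vanishes is via the exponential generating function for permutations by cycle type: the egf tracking each $c_j$ for $j\le k$ with a variable and tracking $(-1)$ for each even cycle of length $>k$ is
\[
\prod_{j\le k}\exp\!\Big(\frac{u_j z^j}{j}\Big)\cdot\prod_{\substack{j>k\\ j\text{ even}}}\exp\!\Big(\frac{-z^j}{j}\Big)\cdot\prod_{\substack{j>k\\ j\text{ odd}}}\exp\!\Big(\frac{z^j}{j}\Big),
\]
and one reads off $\E\big[(-1)^{\sigma_{>k}(\pi)}\mathbf{1}[\cdots]\big]$ as a coefficient of $z^n/n!$ with the $u_j$ differentiated appropriately. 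The tail product is $\exp\big(\sum_{j>k}\frac{z^j}{j} - 2\sum_{j>k,\,2\mid j}\frac{z^j}{j}\big)$; the key point is that $\sum_{j>k,\,2\mid j}\frac{z^j}{j}$ contributes, after combining, a factor like $(1+z)$ raised to a fixed power (more precisely $\sum_{j\ge1}\frac{z^j}{j} - 2\sum_{j\ge1}\frac{z^{2j}}{2j} = \log\frac{1}{1-z} - \log\frac{1}{1-z^2}\cdot\text{(with a sign)}$ rearranges to $\log(1+z)$ up to the removed low-order terms), so the singularity at $z=1$ is \emph{cancelled}: the relevant generating function is analytic in a neighbourhood of $z=1$, whence its $n$th Taylor coefficient decays (indeed super-polynomially), giving $\E\big[(-1)^{\sigma_{>k}(\pi)}\mathbf{1}[\cdots]\big] = o(1)$ — in fact one can be quantitative and get $o(n^{-A})$ for any $A$. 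This is the classical mechanism behind ``a random permutation is equally likely to be even or odd, robustly.''

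The main obstacle, and the step needing the most care, is precisely this analytic estimate: one must bookkeep the finitely many removed low-degree terms ($j\le k$, and the adjustment of even $j\le k$) correctly so as to confirm that the remaining generating function really is singularity-free at $z=1$ (it may have a pole only at $z=-1$), and then invoke a transfer theorem or a direct contour-integral / saddle-point bound to conclude the coefficient is $o(1)$. An alternative, more hands-on route that avoids complex analysis: combine the exact formula $\P(\sigma(\pi)=m)$ expressed via Stirling numbers with an inclusion–exclusion over which of the $a_j$ short cycles are present, but this is messier; I would go with the generating-function argument. Once the twisted expectation is shown to vanish, we get $\P(c_1=a_1,\dots,c_k=a_k\mid\pi\text{ even}) \to \P(X_1=a_1,\dots,X_k=a_k)$, which is the claimed convergence in distribution; the odd case is identical using $\mathbf{1}[\sigma(\pi)\text{ odd}] = \tfrac12(1-(-1)^{\sigma(\pi)})$ and the same estimate.
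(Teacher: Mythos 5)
Your argument is correct, but it follows a genuinely different route from the paper. The paper's proof is a short coupling argument: take $\pi$ uniform on $\cS_n$, multiply by the transposition $(12)$ when needed to force the desired parity, and observe that with high probability the points $1$ and $2$ lie in cycles of length $>2k$ (and, if in the same cycle, at distance $>k$ from each other), so that multiplication by $(12)$ does not change any of the counts $c_1,\dots,c_k$; the limit law then follows directly from Lemma~\ref{Poisson}. Your proof instead twists by the sign character, using $\operatorname{sgn}(\pi)=(-1)^{\#\{\text{even-length cycles}\}}$ and reducing to the vanishing of $\E\big[(-1)^{\sigma_{>k}(\pi)}\mathbf{1}[c_1=a_1,\dots,c_k=a_k]\big]$, which you handle via the cycle-index exponential generating function. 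Your key cancellation is right: after extracting the $u_j$-coefficients the generating function is
\[
\prod_{j\le k}\frac{(z^j/j)^{a_j}}{a_j!}\cdot(1+z)\exp\Big(\sum_{j\le k,\,2\mid j}\frac{z^j}{j}-\sum_{j\le k,\,2\nmid j}\frac{z^j}{j}\Big),
\]
which is \emph{entire} (a polynomial factor times the exponential of a polynomial), so a Cauchy estimate on a circle of any fixed radius $R>1$ gives coefficient decay $O_R(R^{-n})$ and no transfer theorem or saddle-point analysis is needed. One small correction: your parenthetical that the function ``may have a pole only at $z=-1$'' is not right, and it is not a harmless hedge — a genuine pole on the unit circle would destroy the coefficient decay you need (compare $1/(1+z)$, whose coefficients do not tend to $0$). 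Fortunately your own computation shows the factor at $z=-1$ is the zero $(1+z)$ coming from $\exp(\log(1+z))$, not a singularity, so the argument closes. In exchange for being more computational than the paper's two-line coupling, your approach yields a quantitative, super-polynomially small error term in the twisted expectation, whereas the paper's argument is softer but shorter and purely combinatorial.
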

\begin{proof}
Choose $\pi\in \cS_n$ uniformly at random, and define $\sigma$ by putting $\sigma=1$ if $\pi$ is even and $\sigma=(12)$ if $\pi$ is odd. 
Then $\pi\sigma$ is uniformly distributed over $\cA_n$.  By Lemma \ref{Poisson}, as $n\to \infty$,
the number of cycles in $\pi$ of length at most $2k$ approaches a Poisson distribution with parameter $h_{2k}\le 1+\log 2k$.  
Thus, with high probability (as $n\to\infty$) the total number of points in cycles of $\pi$ of length at most $2k$ is at most $2k\log n$,
so with high probability each of these cycles is disjoint from $(12)$.
That is, the points $1$ and $2$ are both contained in cycles of $\pi$ of length at least $2k+1$ with high probability.
Now consider the probability that 1 and 2 are both contained in the same cycle  and are close together.
For each $\ell\ge 2k+1$, the number of cycles of length $\ell$ containing both 1 and 2, which are a distance $\le k$
from each other, equals $\binom{n-2}{\ell-2} 2k(\ell-2)!$.  Hence, the number of permutations $\pi$ containing such a cycle is  at most
\[
 \sum_{2k+1 \le \ell\le n} 2k (n-2)! \le 2k(n-1)!.
\]
Hence, with high probability, if 1 and 2 are in the same cycle they are a distance 
at least $k+1$ from each other. Thus, with high probability, $c_j(\pi\sigma) = c_j(\pi)$ for each $j\leq k$. 
Similarly $\pi\sigma(12)$ is uniformly distributed over odd permutations, and with high probability $c_j(\pi\sigma(12))=c_j(\pi)$.
\end{proof}

\begin{proof}[Proof of Theorem~\ref{fourgen}]
Let $\pi_1,\pi_2,\pi_3,\pi_4\in\cS_n$ be random permutations with $\pi_1$ odd. Let $E_{n,k}$ be the event that $\pi_1,\pi_2,\pi_3,\pi_4$ each fix a set of size $\ell$ for some $\ell$ in the range $1\leq \ell\leq k$, and let $F_{n,k}$ be the event that $\pi_1,\pi_2,\pi_3,\pi_4$ each fix a set of size $\ell$ for some $\ell$ in the range $k<\ell\leq n/2$. 
By Proposition~\ref{fourgen-dyadic} (and summing a geometric series as in the proof of Corollary~\ref{fourint}) we have $\P(F_{n,k})\ll k^{-c}$ uniformly for $1\leq k\leq n/2$, while by Corollary~\ref{fourint} and Lemma~\ref{oddperm} we have $\lim_{n\to\infty} \P(E_{n,k}) \leq 1-\delta$ for all $k$, for some constant $\delta>0$. 
Fix $k_0$ such that $\P(F_{n,k_0})\leq \delta/3$ for all $n\geq 2k_0$. Then $\P(E_{n,k_0})+\P(F_{n,k_0}) \leq 1 - \delta/3$ for all sufficiently large $n$, so we deduce that with probability bounded away from zero $\pi_1,\pi_2,\pi_3,\pi_4$ do not fix sets of the same size $\ell$ for any $\ell\in[1,n/2]$.

Thus with probability bounded away from zero $\pi_1,\pi_2,\pi_3,\pi_4$ invariably generate a transitive subgroup of $\cS_n$. However by the {\L}uczak--Pyber theorem~\cite{lp93}, $\pi_1$ is, with high probability, not contained in any transitive subgroup smaller than $\cA_n$. Since $\pi_1\not\in\cA_n$, with probability bounded away from zero $\pi_1,\pi_2,\pi_3,\pi_4$ invariably generate $\cS_n$.
\end{proof}

\section{Three generators are not enough}\label{sec:threegen}

Theorem~\ref{threegen} follows immediately from the following more specific proposition.

\begin{proposition}\label{threegen-spec}
For every $\eps>0$ there exists $k_0 = k_0(\eps)$ and $n_0 = n_0(\eps)$ such that if $n\geq n_0$ then with probability at least $1-\eps$ there is some $\ell\leq k_0$ such that $\pi_1,\pi_2,\pi_3$ each fix a set of size $\ell$.
\end{proposition}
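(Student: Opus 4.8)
The plan is to transfer to the Poisson model and then establish the sumset statement~\eqref{3-x} by a second moment argument in the spirit of Maier and Tenenbaum~\cite{MT}, using Riesz products to control the relevant correlations. For fixed $k_0$ the event that $\pi_1,\pi_2,\pi_3$ each fix a set of some common size $\ell\le k_0$ depends only on the cycle counts $c_j(\pi_i)$ with $j\le k_0$, so by Lemma~\ref{Poisson} its probability tends, as $n\to\infty$ with $k_0$ fixed, to
\[
\P\bigl(\exists\,\ell\le k_0:\ \ell\in\sL(\X)\cap\sL(\X')\cap\sL(\X'')\bigr),
\]
where $\X,\X',\X''$ are independent copies of the Poisson vector and $\sL$ is as in~\eqref{LX}. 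Thus it suffices to show that this probability tends to $1$ as $k_0\to\infty$, which is a quantitative form of~\eqref{3-x}.

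\emph{Single-scale first moment.} Fix a small $\eps'>0$ and a large $k$, and retain only the cycles of length in $I=I_k:=(k^{\eps'},k]$; set $\sL_I(\X)=\{\sum_{j\in I}jx_j:\ 0\le x_j\le X_j\}\subseteq\sL(\X)$, and let $N=N_k$ count the integers $\ell$ in a fixed dyadic block of length $\asymp k$ that lie in $\sL_I(\X)\cap\sL_I(\X')\cap\sL_I(\X'')$. The number $C$ of cycles of length in $I$ is Poisson with mean $h_k-h_{k^{\eps'}}=(1-\eps')\log k+O(1)$, so on a good event $\mathcal G$ of probability bounded below each $\sL_I$ contains $\asymp k^{(1-\eps')\log 2}$ distinct subset sums, spread over an interval of length $\asymp k$; a typical $\ell$ then lies in all three with probability $\asymp k^{3((1-\eps')\log 2-1)}$, and summing over the $\asymp k$ values of $\ell$ gives $\E[N;\mathcal G]\gg k^{3(1-\eps')\log 2-2-o(1)}\to\infty$ provided $\eps'$ is small enough that $3(1-\eps')\log 2>2$ --- this is precisely where the inequality $\log 2>\tfrac23$ enters. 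To make the lower bound rigorous I would bound $\lvert\sL_I(\X)\cap(\mathrm{block})\rvert$ below, for a single copy, by Cauchy--Schwarz as $\bigl(\sum_\ell r(\ell)\bigr)^2/\sum_\ell r(\ell)^2$, where $r(\ell)$ is the number of subsets of the $I$-cycles whose lengths sum to $\ell$: since $\sum_\ell r(\ell)=2^{C}$, the numerator is $\gg 4^{C}$ on $\mathcal G$, while the denominator equals the Riesz-product integral $\sum_\ell r(\ell)^2=\frac1{2\pi}\int_0^{2\pi}\prod_{j\in I}(2+2\cos j\theta)^{X_j}\,d\theta$, which has to be shown to be $2^{C}k^{o(1)}$ on $\mathcal G$.

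\emph{Second moment.} Here I would write, using independence of the three copies, $\E[N^2;\mathcal G]=\sum_{\ell,\ell'}\P(\ell,\ell'\in\sL_I(\X),\mathcal G_1)^3$, where $\mathcal G_1$ is the single-copy part of $\mathcal G$; the diagonal $\ell=\ell'$ contributes $\E[N;\mathcal G]$, and for the off-diagonal one needs the quasi-independence bound $\P(\ell,\ell'\in\sL_I(\X),\mathcal G_1)\ll\P(\ell\in\sL_I(\X),\mathcal G_1)\,\P(\ell'\in\sL_I(\X),\mathcal G_1)$ for typical pairs $(\ell,\ell')$ --- the permutations with anomalously many $I$-cycles, or with anomalous divisibility structure among their cycle lengths, being excluded by the definition of $\mathcal G$ and handled crudely via Lemma~\ref{single_set}. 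This is the two-variable analogue of the Riesz-product estimate above, concerning the autocorrelations of $\prod_{j\in I}\lvert 1+e^{ij\theta}\rvert^{2X_j}$; it runs parallel to the propinquity estimates of~\cite{MT} and is the technical heart of the proof, and I expect it to be the main obstacle. Granting it, $\E[N^2;\mathcal G]\ll(\E[N;\mathcal G])^2$, so the second moment (Paley--Zygmund) inequality furnishes a constant $\rho>0$, independent of $k$, with $\P(N_k\ge1)\ge\rho$ for all large $k$.

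\emph{Amplification over disjoint scales.} Finally, iterate over scales $k_1<k_2<\cdots$ defined by $k_{i+1}=k_i^{1/\eps'}$, so the intervals $I_{k_i}=(k_i^{\eps'},k_i]$ are pairwise disjoint. Then the events $\{N_{k_i}\ge1\}$ depend on disjoint blocks of coordinates of $\X,\X',\X''$ and are therefore independent, each of probability $\ge\rho$ once $k_i$ is large; taking the number of scales $I$ so large that $(1-\rho)^I<\eps$, we conclude that with probability at least $1-\eps$ some $N_{k_i}\ge1$, which forces some $\ell\le k_0:=k_I$ to lie in $\sL(\X)\cap\sL(\X')\cap\sL(\X'')$. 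Transferring back via the first paragraph gives the proposition.
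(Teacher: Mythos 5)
Your overall architecture (transfer to the Poisson model via Lemma~\ref{Poisson}, work at a single scale with cycles in $(k^{\eps'},k]$, then amplify over disjoint scales) matches the paper, and your final amplification step is essentially identical to Corollary~\ref{threeint}. The numerology is also right: the constraint $3(1-\eps')\log 2>2$ is exactly the paper's $\log 2>\tfrac23$, reflected in the choice of $\beta$. But the core single-scale step --- getting $\P(N_k\ge 1)\ge\rho$ --- rests on the quasi-independence bound $\P(\ell,\ell'\in\sL_I(\X),\mathcal G_1)\ll\P(\ell\in\sL_I(\X),\mathcal G_1)\,\P(\ell'\in\sL_I(\X),\mathcal G_1)$, which you defer entirely, and this is a genuine gap rather than a routine verification. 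Without the quenching it is simply false: the event $\sum_{j\in I}X_j\gtrsim\log k/\log 2$ has probability about $k^{-\delta}$ with $\delta\approx 0.086$ (the phenomenon from \cite{EFG1}), and on it $\sL_I(\X)$ can cover a positive proportion of the block, so the off-diagonal sum $\sum_{\ell\ne\ell'}\P(\ell,\ell'\in\sL_I(\X))^3$ picks up a contribution of order $k^{2-3\delta}$, which swamps $(\E N)^2\approx k^{2+6((1-\eps')\log 2-1)}\approx k^{0.16}$. Even after restricting to the quenched event, the crude bound $\sum_{\ell,\ell'}q^3\le(\max q)^2\sum_{\ell,\ell'}q$ is off by a large power of $k$, so one genuinely needs pointwise quasi-independence for almost all pairs $(\ell,\ell')$; that is a two-variable correlation estimate for the Riesz product at least as hard as anything in the paper, and it is precisely the point where the naive second moment fails in the Erd\H{o}s propinquity problem that \cite{MT} had to circumvent.

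The paper avoids the second moment on $N$ altogether. It normalizes the Riesz product so that $F(0)=1$ for every realization, and applies Cauchy--Schwarz plus Parseval \emph{per instance} to get the deterministic inequality \eqref{cauchyparseval}, lower-bounding the size of the two-dimensional difference set $S(I,\X,\Y,\ZZ)=\{(n_1-n_3,n_2-n_3)\}$ by $\bigl(\int_{\T^2}|F(\bthet)|^2\,d\bthet\bigr)^{-1}$. Bounding $\E\,1_E\int_{\T^2}|F|^2\ll k^{-2}$ is then only a \emph{first}-moment computation over $\X,\Y,\ZZ$ (Lemma~\ref{mainlemma} and Corollary~\ref{cor:intexp}), with the quenching event $E$ of Lemma~\ref{eventE} absorbing exactly the heavy-tail pathology described above. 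This shows $S$ has positive density in $[-10k,10k]^2$ with probability $\ge 1/2$; one does not need $0\in S$ (i.e.\ an exact triple coincidence from $I$ alone), because three auxiliary cycles $j_1,j_2,j_3\in(10k,60k]$, each present with probability bounded below, translate $(j_1-j_3,j_2-j_3)$ into $-S$. If you want to salvage your route you would have to prove the two-variable quasi-independence estimate on the quenched event; the paper's difference-set argument is the standard way to sidestep it, and is what the reference to \cite{MT} in the introduction is pointing at.
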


Let $\X$ be defined as before, and let $\Y$ and $\ZZ$ be independent copies of $\X$. For $I$ an interval in $\N$ let $$\sL(I,\X) = \left\{\sum_{j\in I} jx_j : 0\leq x_j \leq X_j\right\},$$ and define $\sL(I,\Y)$ and $\sL(I,\ZZ)$ analogously.

\begin{lemma}\label{maxS}
Let $I = \{1,\dots,k\}$ and let $\eps>0$. Then with probability at least $1-\eps$ we have $\sL(I,\X),\sL(I,\Y),\sL(I,\ZZ)\subset [0,3\eps^{-1} k]$.
\end{lemma}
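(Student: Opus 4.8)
The plan is to reduce the statement to a first-moment estimate. Since every element of $\sL(I,\X)$ is a non-negative integer of the form $\sum_{j\in I}jx_j$ with $0\le x_j\le X_j$, we have $\sL(I,\X)\subset[0,M(\X)]$ where $M(\X)=\sum_{j=1}^k jX_j$ is the largest element (take $x_j=X_j$ for all $j$), and similarly for $\Y$ and $\ZZ$. Thus it suffices to show that each of $M(\X),M(\Y),M(\ZZ)$ is at most $3\eps^{-1}k$ except on an event of probability at most $\eps$.

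First I would compute $\E\,M(\X)=\sum_{j=1}^k j\cdot\E X_j=\sum_{j=1}^k j\cdot\frac1j=k$, using that $X_j$ is Poisson with parameter $1/j$. Then Markov's inequality gives
\[
\P\big(M(\X)>3\eps^{-1}k\big)\le\frac{\E\,M(\X)}{3\eps^{-1}k}=\frac{\eps}{3},
\]
and the same bound holds for $M(\Y)$ and $M(\ZZ)$ since $\Y,\ZZ$ are distributed as $\X$. A union bound over the three (not necessarily independent, though here they are) events shows that with probability at least $1-3\cdot\frac\eps3=1-\eps$ we have $M(\X),M(\Y),M(\ZZ)\le 3\eps^{-1}k$, and hence $\sL(I,\X),\sL(I,\Y),\sL(I,\ZZ)\subset[0,3\eps^{-1}k]$ on that event.

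There is no real obstacle here: the argument is a one-line first-moment computation followed by Markov and a union bound, and the constant $3$ in $3\eps^{-1}k$ is chosen precisely so that the three-fold union bound absorbs the factor of $3$. The only point worth a moment's attention is confirming that the supremum of $\sL(I,\X)$ is attained and equals $\sum_{j=1}^k jX_j$, which is immediate from the definition in \eqref{LX}.
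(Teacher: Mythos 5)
Your proof is correct and is essentially identical to the paper's: both compute $\E\sum_{j\in I}jX_j = k$, apply Markov's inequality to get each failure probability at most $\eps/3$, and conclude by a union bound over the three copies.
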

\begin{proof}
Since $\E\sum_{j\in I} j X_j = |I|= k,$ by Markov's inequality we have $\sum_{j\in I} jX_j \leq 3\eps^{-1} k$ with probability at least $1-\eps/3$. Similarly $\sum_{j\in I} j Y_j\leq 3\eps^{-1}k$ and $\sum_{j\in I} jZ_j \leq 3\eps^{-1}k$ each with probability at least $1-\eps/3$, and the lemma follows.
\end{proof}

\begin{lemma}\label{eventE}
Fix $\eps$, $0<\eps<1/2$.  There is a constant $C(\eps)$ so that with probability at least $1-\eps$ we have
\begin{align*}
\sum_{m<j\leq k} X_j &\geq 0.99\log(k/m) - C(\eps),\\
\sum_{m<j\leq k} Y_j &\geq 0.99\log(k/m) - C(\eps),~\text{and}\\
\sum_{m<j\leq k} Z_j &\geq 0.99\log(k/m) - C(\eps).
\end{align*}
for every nonnegative integer $m\leq k$.
\end{lemma}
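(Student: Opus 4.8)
The plan is to prove the inequality for the single sequence $\X$ with probability at least $1-\eps/3$ and then take a union bound over $\X,\Y,\ZZ$. Write $S_m=\sum_{m<j\le k}X_j$. Since the $X_j$ are independent and $X_j\sim\mathrm{Poisson}(1/j)$, the variable $S_m$ is Poisson with mean $\lambda_m=\sum_{m<j\le k}1/j=h_k-h_m$, and the elementary bounds $h_k\ge\log k$ and $h_m\le 1+\log m$ give $\lambda_m\ge\log(k/m)-1$ for $1\le m\le k$. We may assume $m\ge1$: interpreting $\log(k/0)$ as $\log k$, the case $m=0$ then follows from the case $m=1$ because $S_0\ge S_1$.

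The first real step is a dyadic reduction. Both $m\mapsto S_m$ and $m\mapsto\log(k/m)$ are non-increasing on $\{1,\dots,k\}$, so putting $m_i=\fl{k2^{-i}}$ and, for $1\le m\le k$, $i(m)=\fl{\log_2(k/m)}$, we have $m\le m_{i(m)}$ and $\log(k/m)\le\log(k/m_{i(m)})+\log 2$. Hence a bound of the form $S_{m_i}\ge 0.99\log(k/m_i)-C_0$ holding for all $i\ge1$ with $m_i\ge1$ implies $S_m\ge 0.99\log(k/m)-(C_0+1)$ for every $1\le m\le k$ (the range $k/2<m\le k$ is already trivial, since there $0.99\log(k/m)-C<0\le S_m$ as soon as $C\ge1$). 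So it is enough to make $\P\bigl(\exists\, i\ge1:\ S_{m_i}<0.99\log(k/m_i)-C_0\bigr)\le\eps/3$ for a suitable $C_0=C_0(\eps)$.

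For this I would estimate each dyadic term with the standard Chernoff bound $\P(\mathrm{Poisson}(\lambda)\le\lambda-u)\le e^{-u^2/(2\lambda)}$, valid for $u\ge0$. Fix $i$ with $m_i\ge1$, write $\lambda=\lambda_{m_i}$ and $L=\log(k/m_i)$, and note $L\ge i\log 2$ (since $m_i\le k2^{-i}$) and $\lambda\ge L-1$. If $0.99L-C_0<0$ the event is empty; otherwise, using $L\le\lambda+1$, the threshold $0.99L-C_0$ is at most $\lambda-u$ with $u=0.01\lambda+(C_0-1)\ge0$ (assuming $C_0\ge1$), so the Chernoff bound gives a term $\le e^{-u^2/(2\lambda)}\le e^{-c_1\lambda}e^{-c_2C_0}$ for absolute constants $c_1,c_2>0$. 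Summing over $i\ge1$ and using $\lambda_{m_i}\ge i\log 2-1$, the geometric series $\sum_{i\ge1}e^{-c_1(i\log 2-1)}$ converges to an absolute constant $A$, so the total is at most $Ae^{-c_2C_0}$, uniformly in $k$. Taking $C_0=C_0(\eps)$ with $Ae^{-c_2C_0}\le\eps/3$ and then $C=C_0+1$ finishes the proof for $\X$; applying the same argument to $\Y$ and $\ZZ$ and a union bound gives the stated probability $1-\eps$.

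There is no serious obstacle here. The one point to watch is that the $1\%$ gap between the coefficient $0.99$ and the true mean $\lambda_m\asymp\log(k/m)$ forces $u\asymp\lambda$ in the Chernoff estimate, so each dyadic term decays only geometrically in the scale index and with a tiny rate, and the series sums to a large absolute constant $A$. This is harmless precisely because $C(\eps)$ is allowed to depend on $\eps$: an additive shift $C_0$ in the threshold buys a multiplicative saving $e^{-c_2C_0}$, which absorbs $A$. The only other care needed is the elementary bookkeeping of the dyadic reduction and the $m=0$ convention.
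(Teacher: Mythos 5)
Your proof is correct and follows essentially the same route as the paper: reduce to a single sequence by a union bound, pass to dyadic values of $m$, apply a Poisson lower-tail estimate at each scale, sum the resulting series (with the same tiny rate $\approx 0.00005$ coming from the $1\%$ gap), and absorb the constant into $C(\eps)$. The only cosmetic difference is that you invoke the packaged Chernoff bound $\P(\mathrm{Poisson}(\lambda)\le\lambda-u)\le e^{-u^2/(2\lambda)}$, whereas the paper bounds the failure indicator by $0.99^{S_{m'}-0.99\log(k/m')}$ and evaluates $\E\,a^P=e^{(a-1)\lambda}$ directly, which is the same exponential-moment argument with the fixed parameter $a=0.99$.
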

\begin{proof}
Let $C = C(\eps)$ be a constant whose properties will be specified later. It suffices to show that the first inequality holds for all $m\leq k$ with probability at least $1-\eps/3$. There is nothing to prove if $m\ge e^{-C}k$, so we may suppose $m\leq e^{-C}k$. We may also suppose that $C\ge 1$. 

Let $E$ be the event that $$\sum_{m<j\leq k} X_j \geq 0.99\log(k/m)-1$$ for all $m\leq e^{-C} k$. Suppose $E$ fails, say $$\sum_{m<j\leq k} X_j < 0.99\log(k/m)-1$$ for some $m\leq e^{-C} k$. Writing $m'$ for the smallest power of $2$ with $m'>m$, we thus have $$\sum_{m'<j\leq k} X_j \leq \sum_{m<j\leq k} X_j \leq 0.99 \log(k/m) -1\leq 0.99\log(k/m').$$
Thus
$$1_{E^c} \leq \sum_{\substack{m'\leq 2 e^{-C} k\\\text{dyadic}}} 0.99^{\sum_{m'<j\leq k} X_j-0.99\log(k/m')}.$$
Whenever $P$ is Poisson of parameter $\lambda$ and $a>0$ we have $\E a^P = e^{(a-1)\lambda}$, and the sum $\sum_{m'<j\leq k}X_j$ is Poisson with parameter $\sum_{m'<j\leq k} 1/j = \log(k/m') + O(1)$, so
\begin{align*}
\P(E^c) &\ll \sum_{\substack{m'\leq 2 e^{-C} k\\\text{dyadic}}} \exp\left((0.99-1 - 0.99\log(0.99))\log(k/m')\right)\\
 &\leq \sum_{\substack{m'\leq 2 e^{-C} k\\\text{dyadic}}}(k/m')^{-0.00005}\\
 &\ll e^{-0.00005C}.
\end{align*}
Therefore, $\P(E^c) \le \eps/3$ if $C$ is taken large enough.
\end{proof}

We need a standard estimate for the partial sums of the Fourier series $\sum_{j=1}^\infty \frac{\cos(2\pi j\theta)}{j}=-\log|2\sin(\pi\theta)|$. Denote by $\|x\|$ the distance from $x$ to $\Z$. 

\begin{lemma}\label{trigsum}
\[ \sum_{j \leq m} \frac{\cos(2\pi j \theta)}j = \log \min\(\frac{1}{\|\theta\|},m\) + O(1) \quad
\text{for}~\|\theta\|>0.  \]
\end{lemma}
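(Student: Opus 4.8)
The plan is to estimate the partial sum $S_m(\theta) := \sum_{j\le m} \frac{\cos(2\pi j\theta)}{j}$ by comparing it with its known limit $S_\infty(\theta) = -\log|2\sin(\pi\theta)|$ and controlling the tail $\sum_{j>m}\frac{\cos(2\pi j\theta)}{j}$ uniformly. First I would split into two regimes according to whether $m\|\theta\|\le 1$ or $m\|\theta\|\ge 1$, since the claimed answer $\log\min(1/\|\theta\|,m)+O(1)$ behaves differently in each: when $m\le 1/\|\theta\|$ the truncation is ``early'' and the sum should look like $\log m+O(1)$, while when $m\ge 1/\|\theta\|$ the sum has essentially converged to $-\log|2\sin(\pi\theta)| = \log(1/\|\theta\|)+O(1)$ (using $|2\sin(\pi\theta)|\asymp\|\theta\|$).

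In the regime $m\le 1/\|\theta\|$, I would argue directly: for $j\le m\le 1/\|\theta\|$ we have $\|j\theta\|\le j\|\theta\|\le 1$, hmm, that is not quite small enough, so instead I would write $\cos(2\pi j\theta) = 1 + O(\min(1, j^2\|\theta\|^2))$ and sum: $\sum_{j\le m} \frac1j = \log m + O(1)$, while $\sum_{j\le m} \frac{O(\min(1,j^2\|\theta\|^2))}{j} \ll \sum_{j\le 1/\|\theta\|} j\|\theta\|^2 + \sum_{1/\|\theta\|<j\le m}\frac1j$, and since $m\le 1/\|\theta\|$ the second sum is empty, giving $O(1)$ overall. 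Hence $S_m(\theta) = \log m + O(1)$ as desired. In the regime $m\ge 1/\|\theta\|$, I would write $S_m(\theta) = S_\infty(\theta) - \sum_{j>m}\frac{\cos(2\pi j\theta)}{j}$, note $S_\infty(\theta) = -\log|2\sin(\pi\theta)| = \log(1/\|\theta\|) + O(1)$, and bound the tail by Abel summation: with $C_m = \sum_{j\le m}\cos(2\pi j\theta)$ satisfying $|C_m|\ll 1/\|\theta\|$ uniformly (the standard geometric-series / Dirichlet-kernel bound), summation by parts gives $\big|\sum_{j>m}\frac{\cos(2\pi j\theta)}{j}\big| \ll \frac{1}{m\|\theta\|} \ll 1$ using $m\ge 1/\|\theta\|$. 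This yields $S_m(\theta) = \log(1/\|\theta\|)+O(1)$.

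Finally I would observe that the two estimates agree at the boundary $m\asymp 1/\|\theta\|$ and together cover all cases, so in both regimes $S_m(\theta) = \log\min(1/\|\theta\|, m) + O(1)$, which is exactly the assertion. I expect the only mildly delicate point to be making the partial-cosine-sum bound $|C_m|\ll 1/\|\theta\|$ and the resulting Abel-summation tail bound fully uniform in $\theta$ (in particular checking the implied constants do not blow up as $\|\theta\|\to 0$, where $m\ge 1/\|\theta\|$ forces $m$ large), but this is entirely standard: $\sum_{j\le m} e^{2\pi i j\theta}$ is a geometric sum with ratio $e^{2\pi i\theta}$, so its modulus is at most $1/|\sin(\pi\theta)| \ll 1/\|\theta\|$, and likewise $|\sum_{j>m}e^{2\pi i j\theta} \cdot(\text{anything})|$ telescopes cleanly. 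Everything else is bookkeeping with $\log$'s.
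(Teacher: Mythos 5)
Your proof is correct, and it uses the same two tools as the paper: the expansion $\cos(2\pi j\theta)=1+O(j^2\|\theta\|^2)$ for $j\le\min(m,1/\|\theta\|)$, and partial (Abel) summation against the geometric-sum bound $\bigl|\sum_{j\le N}e(j\theta)\bigr|\ll 1/\|\theta\|$ for the oscillatory range. The one structural difference is in the regime $m\ge 1/\|\theta\|$: the paper estimates the remaining block $\sum_{1/\|\theta\|<j\le m}\cos(2\pi j\theta)/j=O(1)$ directly by Abel summation, so its proof is self-contained and never actually uses the closed form of the full series; you instead take the identity $\sum_{j\ge1}\cos(2\pi j\theta)/j=-\log|2\sin(\pi\theta)|$ (together with convergence of that series and $|\sin(\pi\theta)|\asymp\|\theta\|$) as known, and then bound only the tail $\sum_{j>m}$, getting $\ll 1/(m\|\theta\|)\ll 1$. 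Both routes are fine and of comparable length; the paper's avoids having to justify the infinite-series identity (which itself needs an Abel-summation or Abel's-theorem argument), while yours makes transparent the intuition that for $m\ge 1/\|\theta\|$ the partial sum has already converged to within $O(1)$ of its limit. Since the paper quotes that identity immediately before the lemma, relying on it is legitimate here, and your uniformity concerns are handled exactly as you indicate.
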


\begin{proof}
We may assume that $0<\theta\le  \frac12$. Using the bound $\cos(2\pi j\theta) = 1 + O(j^2 \theta^2)$, we get
\[
  \sum_{j \leq \min(m,1/\theta)} \frac{\cos(2\pi j \theta)}j = \log \min(m,1/\theta)+O(1).
\]
This proves the lemma if $\|\theta\| \leq 1/m$. Suppose, then, that $\|\theta\| > 1/m$. Set 
\[
  S_j = \sum_{n=0}^j e^{2\pi i n\theta},
\]
and note that by summing the geometric series we have
\begin{equation}\label{Sjbound}
  S_j =   \frac{e^{2\pi i j\theta}-1}{e^{2\pi i \theta}-1} \ll \frac{1}{\theta}.
\end{equation}
Thus (by ``Abel summation''),
\begin{align*}
 \sum_{1/\theta < j \le m} \frac{\cos(2\pi j \theta)}j &= \Re \sum_{1/\theta < j \le m} \frac{e^{2\pi i j \theta}}j
 =\Re \sum_{1/\theta < j \le m} \frac{S_j-S_{j-1}}{j}\\ &=\Re \sum_{1/\theta < j \le m-1} \frac{S_j}{j(j+1)} + \frac{S_m}{m}
 -\frac{S_{\lceil 1/\theta \rceil-1}}{\lceil 1/\theta \rceil}.
\end{align*}
The latter two terms here are $O(1)$ by the trivial bound $|S_j|\leq j$, while from~\eqref{Sjbound} the sum is bounded by a constant times
\[
  \frac1\theta \sum_{j > 1/\theta} \frac1{j^2} \ll 1.\qedhere
\]
\end{proof}

Let $\T=\R/\Z$ be the unit torus, and denote $e(z)=e^{2\pi i z}$.
Given $I,\X,\Y,\ZZ$ define $F:\T^2 \to\C$ by
$$F(\bthet) = \prod_{j\in I} \(\frac{1+e(j\theta_1)} 2\)^{X_j}\(\frac{1+e(j \theta_2)}2\)^{Y_j}\(\frac{1+e(j(-\theta_1-\theta_2))}2\)^{Z_j}.$$
By expanding the product we see that $\hat{F}:\Z^2\to\C$ is supported on the set
\begin{equation}\label{s-def} S(I,\X,\Y,\ZZ) = \{ (n_1-n_3, n_2-n_3) : n_1\in \sL(I,\X), n_2\in  \sL(I,\Y), n_3\in \sL(I,\ZZ)\}.\end{equation}
Since $\sum_{a\in\Z^2} \hat{F}(a) = F(0) = 1,$ by Cauchy--Schwarz we
have
\[
1=\Bigg( \sum_{a\in\Z^2} \hat{F}(a) \bigg)^2\le \bigg(
\sum_{a:\hat{F}(a)\ne 0} 1 \bigg) \sum_{a\in \Z^2} |\hat{F}(a)|^2 \le
|S(I,\X,\Y,\ZZ)|  \sum_{a\in \Z^2} |\hat{F}(a)|^2.
\]
Applying Parseval, we get
\begin{equation}\label{cauchyparseval}
|S(I,\X,\Y,\ZZ)|  \geq \(\sum_{a\in\Z^2} |\hat{F}(a)|^2\)^{-1} = \(\int_{\T^2} |F(\bthet)|^2 \,d\bthet\)^{-1}.
\end{equation}

\begin{lemma}\label{mainlemma}
Let
\[
  \beta = 1- \frac{2}{3\log 2} - 0.02\approx0.0182,
\]
and let $I=(k^\beta,k]$. Fix $\eps \in (0,1/2)$, and let $E = E(\eps)$ be the event from Lemma~\ref{eventE}. Then both of the bounds
\begin{equation}
\E 1_E |F(\bthet)|^2 \ll_\eps (k\|\theta_1\|^{1/3}  \|\theta_2\|^{1/3}  \|\theta_3\|^{1/3})^{-2.02}\label{t1t2t3}
\end{equation}
and
\begin{equation} \E 1_E |F(\bthet)|^2 \ll_\eps (k\|\theta_i\|^{1/2}  \|\theta_j\|^{1/2})^{-1.3} \quad (\{i,j\} \subset \{1,2,3\})\label{t1t2}
\end{equation}
hold uniformly for $\bthet\in\T^2$, where $\theta_3=-\theta_1-\theta_2$. The expectation is over $\X, \Y, \ZZ$.
\end{lemma}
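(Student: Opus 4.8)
The plan is to separate the three permutations and reduce everything to a one‑variable estimate. Write $E=E_\X\cap E_\Y\cap E_\ZZ$, where $E_\X$ is the event that $\sum_{m<j\le k}X_j\ge 0.99\log(k/m)-C(\eps)$ for every integer $m\le k$, and $E_\Y,E_\ZZ$ are the analogues for $\Y,\ZZ$. Since $\bigl|\tfrac{1+e(j\theta)}{2}\bigr|^{2}=\cos^2(\pi j\theta)$ we have
\[
 |F(\bthet)|^2=\prod_{j\in I}\cos^{2X_j}(\pi j\theta_1)\cdot\prod_{j\in I}\cos^{2Y_j}(\pi j\theta_2)\cdot\prod_{j\in I}\cos^{2Z_j}(\pi j\theta_3),
\]
with $\theta_3=-\theta_1-\theta_2$, and by independence $\E\,1_E|F(\bthet)|^2=G(\theta_1)G(\theta_2)G(\theta_3)$, where $G(\theta):=\E\,1_{E_\X}\prod_{j\in I}\cos^{2X_j}(\pi j\theta)$ (the same function at all three points, as $\X,\Y,\ZZ$ are identically distributed).

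The heart of the argument is the scalar bound: with $\gamma:=0.99\log 1.98+0.01\approx 0.686$ and $\widetilde\theta:=\min(\|\theta\|,k^{-\beta})$,
\[
 G(\theta)\ll_\eps\bigl(k\widetilde\theta\bigr)^{-\gamma}\qquad(\theta\in\T).
\]
I would prove this by a tilting (exponential moment) argument. Set $m:=\max\bigl(\lceil 1/\|\theta\|\rceil,\lfloor k^\beta\rfloor\bigr)$ and $J:=\{j\in\Z:m<j\le k\}$; then $J\subseteq I$, and since the omitted factors $\cos^{2X_j}(\pi j\theta)$ for $j\in I\setminus J$ lie in $[0,1]$ we may bound the product over $I$ by the product over $J$. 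On $E_\X$ we have $1_{E_\X}\le t^{\,N_J-N_0}$ for every $t\ge1$, where $N_J:=\sum_{j\in J}X_j$ and $N_0:=0.99\log(k\widetilde\theta)-O_\eps(1)$ (this is the $E_\X$‑inequality at our chosen $m$). Using $\E a^{X_j}=e^{(a-1)/j}$ for a Poisson variable of mean $1/j$,
\[
 G(\theta)\le t^{-N_0}\,\E\prod_{j\in J}\bigl(t\cos^2(\pi j\theta)\bigr)^{X_j}=t^{-N_0}\exp\!\Bigl(t\sum_{j\in J}\tfrac{\cos^2(\pi j\theta)}{j}-\sum_{j\in J}\tfrac1j\Bigr).
\]
Now $\sum_{j\in J}1/j=\log(k\widetilde\theta)+O(1)$, and writing $\cos^2(\pi j\theta)=\tfrac12(1+\cos 2\pi j\theta)$ and applying Lemma~\ref{trigsum} (the cosine sum over $J$ is $O(1)$ because $J$ begins at $\asymp 1/\|\theta\|$) gives $\sum_{j\in J}\cos^2(\pi j\theta)/j=\tfrac12\log(k\widetilde\theta)+O(1)$. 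Hence, with $L:=\log(k\widetilde\theta)$,
\[
 G(\theta)\le\exp\!\Bigl(L\bigl(-0.99\log t+\tfrac t2-1\bigr)+O_\eps(1)\Bigr),
\]
and optimising over $t\ge1$ — the minimiser is $t=1.98$ — gives the exponent $-\gamma L$, as claimed. (When $L=O_\eps(1)$, i.e. $\|\theta\|$ is within a bounded factor of $1/k$, the claim follows trivially from $G(\theta)\le1$.)

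It then remains to feed $G(\theta)\ll_\eps(k\widetilde\theta)^{-\gamma}$ into $\E\,1_E|F(\bthet)|^2=G(\theta_1)G(\theta_2)G(\theta_3)\ll_\eps\prod_{i=1}^3(k\widetilde\theta_i)^{-\gamma}$ and unwind. This is a short case analysis on how many of $\|\theta_1\|,\|\theta_2\|,\|\theta_3\|$ exceed $k^{-\beta}$ (treating $\|\theta_i\|<1/k$ separately, where the right‑hand sides of \eqref{t1t2t3} and \eqref{t1t2} are already $\gg1$), and the only nontrivial inputs are the two numerical facts
\[
 3\gamma(1-\beta)>2.02\quad\text{and}\quad 2\gamma(1-\beta)>1.3,
\]
both of which use $1-\beta=\tfrac{2}{3\log 2}+0.02$. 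The first is rather tight — this is exactly where the $0.02$ in $\beta$ and the $0.99$ in Lemma~\ref{eventE} are needed — and the choice $I=(k^\beta,k]$ (rather than $(1,k]$) is what makes the "saturated" case $\widetilde\theta_i=k^{-\beta}$ of the scalar bound as strong as $k^{-\gamma(1-\beta)}$, which is what clears the threshold.

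The main obstacle is the scalar estimate, specifically obtaining the constant $\gamma$: the unconditional version of the tilting bound only reproves $\E\prod_{j\in I}\cos^{2X_j}(\pi j\theta)\asymp(k\widetilde\theta)^{-1/2}$, and the improvement to $\gamma\approx 0.686$ comes entirely from the constraint $1_{E_\X}\le t^{N_J-N_0}$; one must check that the optimal tilt $t=1.98$ is admissible ($\ge1$) and keep careful track of the $O_\eps(1)$ errors (from $C\log t$, from Lemma~\ref{trigsum}, and from the endpoints $j\asymp k^\beta$ and $j\asymp 1/\|\theta\|$), since there is essentially no room to spare in $k$ in the inequality $3\gamma(1-\beta)>2.02$.
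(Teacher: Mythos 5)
Your proposal is correct and is essentially the paper's own argument: both rest on the Riesz-product/tilting bound $1_E\le t^{N-N_0}$ combined with $\E a^P=e^{(a-1)\lambda}$ for Poisson $P$ and Lemma~\ref{trigsum}, with the same cutoff at $\max(1/\|\theta_i\|,k^\beta)$ and the same numerical inequalities $3\gamma(1-\beta)>2.02$, $2\gamma(1-\beta)>1.3$. The only cosmetic differences are that you factor the expectation into three one-variable estimates up front (the paper does this implicitly when evaluating its analogue of your tilted expectation) and that you optimise the tilt to $t=1.98$ where the paper simply takes $t=2$, giving $\gamma=0.99\log 2$ instead of your marginally larger $0.99\log 1.98+0.01$.
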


\begin{proof}
Define, for $i \in \{1,2,3\}$,
\[ k_i = \left\{ \begin{array}{ll}k^{\beta} & \mbox{if $\Vert \theta_i \Vert \geq k^{-\beta}$}; \\ 1/\Vert \theta_i \Vert & \mbox{if $1/k < \Vert \theta_i \Vert < k^{-\beta}$}; \\ k & \mbox{if $\Vert \theta_i \Vert \leq 1/k$}. \end{array}  \right. \]
It is useful to note the (slightly crude) bound
\begin{equation}\label{star-bd} k_i \leq \frac{k^{\beta}}{\Vert \theta_i \Vert^{1 - \beta}},\end{equation} which follows by an analysis of the three cases in the definition of $k_i$. 
If $E$ holds then
$$\sum_{k_1< j\leq k} X_j + \sum_{k_2< j\leq k} Y_j +\sum_{k_3< j\leq k} Z_j \geq 0.99\log(k^3/(k_1k_2k_3)) - C(\eps),$$
so
$$1_E |F(\bthet)|^2 \ll_\eps (k^3/k_1k_2k_3)^{-0.99\log 2} |F(\bthet)|^2 2^{\sum_{k_1<j\leq k} X_j + \sum_{k_2<j\leq k} Y_j + \sum_{k_3<j\leq k} Z_j}.$$
From \eqref{star-bd} and the inequality $3 \times 0.99\log 2 \times (1-\beta) > 2.02$, we deduce that
$$1_E |F(\bthet)|^2 \ll_\eps (k\|\theta_1\|^{1/3} \|\theta_2\|^{1/3} \|\theta_3\|^{1/3})^{-2.02} |F(\bthet)|^2 2^{\sum_{k_1<j\leq k} X_j + \sum_{k_2<j\leq k} Y_j + \sum_{k_3<j\leq k}Z_j}.$$
Thus~\eqref{t1t2t3} will follow if we can prove
\begin{equation}\label{EFtheta1}
\E |F(\bthet)|^2 2^{\sum_{k_1<j\leq k} X_j + \sum_{k_2<j\leq k} Y_j + \sum_{k_3<j\leq k}Z_j} \ll 1.
\end{equation}

Similarly, from \eqref{star-bd} for $i = 1,2$ and the trivial bound $k_3 \leq k$, and using $2\times 0.99\log 2 \times (1-\beta) > 1.3$, we deduce that
$$1_E |F(\bthet)|^2 \ll_\eps (k\Vert\theta_1\Vert^{1/2} \Vert\theta_2\Vert^{1/2})^{-1.3} |F(\bthet)|^2
 2^{\sum_{k_1<j\leq k} X_j + \sum_{k_2<j\leq k} Y_j + \sum_{k_3<j\leq k}Z_j},$$
and similarly for other permutations of the indices $1,2,3$,
 so~\eqref{t1t2}   will also follow from~\eqref{EFtheta1}.

It remains only to prove~\eqref{EFtheta1}. We have a factorization
\begin{align*}
\E|F(\bthet)|^2 &2^{\sum_{k_1<j\leq k} X_j + \sum_{k_2<j\leq k} Y_j + \sum_{k_3<j\leq k}Z_j}\\
 &= \prod_{k^\beta<j\leq k_1} \E \left|\frac{1 + e(j\theta_1)}2\right|^{2X_j} \prod_{k_1<j\leq k} \E \(2 \left|\frac{1 + e(j\theta_1)}2\right|^2\)^{X_j}\\
 &\,\times\prod_{k^\beta<j\leq k_2} \E \left|\frac{1 + e(j\theta_2)}2\right|^{2Y_j} \prod_{k_2<j\leq k} \E \(2 \left|\frac{1 + e(j\theta_2)}2\right|^2\)^{Y_j}\\
 &\,\times\prod_{k^\beta<j\leq k_3} \E \left|\frac{1 + e(j\theta_3)}2\right|^{2Z_j} \prod_{k_3<j\leq k} \E \(2 \left|\frac{1 + e(j\theta_3)}2\right|^2\)^{Z_j}.
\end{align*}
By using again the calculation $\E a^P = e^{(a-1)\lambda}$ for $P$ Poisson with parameter $\lambda$, we get
\begin{align*}
\E|F(\bthet)|^2 &2^{\sum_{k_1<j\leq k} X_j + \sum_{k_2<j\leq k} Y_j + \sum_{k_3<j\leq k}Z_j}\\
 &= \exp\sum_{i=1}^3 \(\sum_{k^\beta<j\leq k_i} \frac1j \(\left|\frac{1 + e(j\theta_i)}2\right|^2-1\) + \sum_{k_i<j\leq k} \frac1j \( 2\left|\frac{1 + e(j\theta_i)}2\right|^2-1\)\)\\
 &= \exp\sum_{i=1}^3 \(\sum_{k^\beta<j\leq k_i} \frac{\cos(2\pi j \theta_i) - 1}{2j} + \sum_{k_i<j\leq k} \frac{\cos(2\pi j\theta_i)}j\)\\
 &= \exp\sum_{i=1}^3 \(\frac12 \log\frac{\min(k_i,1/\|\theta_i\|)}{\min(k^\beta,1/\|\theta_i\|)} - \frac12 \log\frac{k_i}{k^\beta} + \log\frac{\min(k,1/\|\theta_i\|)}{\min(k_i,1/\|\theta_i\|)} + O(1)\)
\end{align*}
by Lemma~\ref{trigsum}. Checking the three cases in the definition of $k_i$ separately, it can be confirmed that this is always $O(1)$.
\end{proof}

\begin{corollary}\label{cor:intexp}
With notation as in Lemma~\ref{mainlemma}, we have
\begin{equation}\label{intbound}
 \int_{\T^2} \E 1_E |F(\bthet)|^2\,d\bthet \ll_\eps k^{-2}.
\end{equation}
\end{corollary}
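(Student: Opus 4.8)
Here is how I would prove Corollary~\ref{cor:intexp}.

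The plan is to integrate the two pointwise bounds of Lemma~\ref{mainlemma} over $\T^2$, using each one where it is efficient. The structural fact that makes this work is that, since $\theta_1+\theta_2+\theta_3\equiv 0$, the two largest of $\|\theta_1\|,\|\theta_2\|,\|\theta_3\|$ are always comparable: if $\|\theta_a\|\le\|\theta_b\|\le\|\theta_c\|$ is the sorted triple then $\|\theta_c\|=\|\theta_a+\theta_b\|\le\|\theta_a\|+\|\theta_b\|\le 2\|\theta_b\|$, so $\|\theta_b\|\asymp\|\theta_c\|$. Both bounds \eqref{t1t2t3} and \eqref{t1t2} are invariant under permuting the indices $1,2,3$, and such permutations act on $\T^2$ by measure-preserving linear maps; so, decomposing $\T^2$ into the six regions on which $\|\theta_1\|,\|\theta_2\|,\|\theta_3\|$ occur in a prescribed order, it suffices to bound the integral over $R=\{\bthet:\|\theta_1\|\le\|\theta_2\|\le\|\theta_3\|\}$, on which $\|\theta_2\|\asymp\|\theta_3\|$.

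On the ``bulk'' region $R'=R\cap\{\|\theta_2\|\ge 1/k\}$ I would apply \eqref{t1t2t3}; using $\|\theta_3\|\asymp\|\theta_2\|$ it gives $\E 1_E|F(\bthet)|^2\ll_\eps k^{-2.02}\|\theta_1\|^{-2.02/3}\|\theta_2\|^{-4.04/3}$. Integrating first over $\|\theta_1\|\le\|\theta_2\|$---convergent since $2.02/3<1$, and contributing a factor $\asymp\|\theta_2\|^{\,1-2.02/3}$---leaves the exponent $1-2.02/3-4.04/3=-1.02$ on $\|\theta_2\|$, so the $\theta_2$-integral over $1/k\le\|\theta_2\|\le 1/2$ is $\asymp k^{0.02}$ and $\int_{R'}\ll_\eps k^{-2}$. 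On the complementary region $R''=R\cap\{\|\theta_2\|<1/k\}$---where necessarily $\|\theta_1\|<1/k$ as well---the bound \eqref{t1t2t3} is no longer integrable, because the zero-sum relation forces all three $\|\theta_i\|$ to be small together; there I would use \eqref{t1t2} with the pair $\{1,2\}$, giving $\E 1_E|F(\bthet)|^2\ll_\eps k^{-1.3}\|\theta_1\|^{-0.65}\|\theta_2\|^{-0.65}$. Integrating over $\|\theta_1\|\le\|\theta_2\|$ contributes a factor $\asymp\|\theta_2\|^{0.35}$, and then $\int_{\|\theta_2\|<1/k}\|\theta_2\|^{-0.3}\,d\theta_2\asymp k^{-0.7}$, so $\int_{R''}\ll_\eps k^{-1.3-0.7}=k^{-2}$. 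Summing over $R'$, $R''$ and the six symmetric copies gives \eqref{intbound}.

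The delicate point---the one I would be sure to state explicitly---is the complementary roles of the two bounds. Bound \eqref{t1t2t3} is the sharp one in the bulk but diverges near the origin of $\T^2$, where $\theta_1+\theta_2+\theta_3\equiv 0$ makes the triple coincidence unavoidable; bound \eqref{t1t2}, which is blind to the third coordinate, cures this but is far too weak away from the origin, where instead one must lean on the comparability $\|\theta_{\mathrm{mid}}\|\asymp\|\theta_{\mathrm{max}}\|$ to make \eqref{t1t2t3} integrable. Everything past the case division is elementary one-variable calculus, and the exponents $2.02$ and $1.3$ in Lemma~\ref{mainlemma} have been calibrated with exactly enough slack (a loss of only $k^{0.02}$ in $R'$, and a genuine gain of $k^{-0.7}$ in $R''$) to reach the target exponent $-2$.
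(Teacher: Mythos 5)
Your proof is correct and follows essentially the same strategy as the paper: decompose $\T^2$ according to the relative and absolute sizes of $\|\theta_1\|,\|\theta_2\|,\|\theta_3\|$, use \eqref{t1t2t3} where the middle coordinate is $\geq 1/k$ and \eqref{t1t2} near the origin, and integrate. The only cosmetic difference is that the paper exploits $\|\theta_{\max}\|\geq\|\theta_{\mathrm{mid}}\|^{1/2}\|\theta_{\min}\|^{1/2}$ to factor the bulk integral (and uses the trivial bound $|F|\leq 1$ on the measure-$O(k^{-2})$ region near the origin), whereas you use the comparability $\|\theta_{\mathrm{mid}}\|\asymp\|\theta_{\max}\|$ and integrate iteratively; both yield $\ll_\eps k^{-2}$.
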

\begin{proof}
Divide $\T^2$ into three regions $R_1, R_2, R_3$ as follows: 
\begin{align*}
  R_1 &= \{\bthet\in\T^2 : \|\theta_i\| \geq 1/k~\text{for all three}~i\in\{1,2,3\}\},\\
  R_2 &= \{\bthet\in\T^2 : \|\theta_i\|\geq 1/k~\text{for exactly two}~i\in\{1,2,3\}\},\\
  R_3 &= \{\bthet\in\T^2 : \|\theta_i\|\geq 1/k~\text{for at most one}~i\in\{1,2,3\}\}.
\end{align*}
We will bound the integral differently in each region.

Further subdivide $R_1$ according to which of $\|\theta_1\|,\|\theta_2\|,\|\theta_3\|$ is largest. In the subregion $R_1'$ in which say $\|\theta_1\|$ is largest we have $\|\theta_1\|\geq \|\theta_2\|^{1/2}\|\theta_3\|^{1/2}$, so by~\eqref{t1t2t3} we have
\begin{align*}
  \int_{R_1'} \E 1_E |F(\bthet)|^2 \,d\bthet
  &\ll_\eps \int_{R_1} (k \|\theta_2\|^{1/2} \|\theta_3\|^{1/2})^{-2.02} \,d\bthet\\
  &= \(\int_{\|\theta\|\geq 1/k} (k\|\theta\|)^{-1.01} \,d\theta\)^2\\
  &\order k^{-2}.
\end{align*}
We can bound the integral over the other subregions in the same way, so the integral over $R_1$ is indeed $\ll_\eps k^{-2}$.

Similarly, subdivide $R_2$ according to the relative order of $\|\theta_1\|, \|\theta_2\|, \|\theta_3\|$, and focus for the moment on the subregion $R_2'$ in which $\|\theta_1\|\leq \|\theta_2\|\leq \|\theta_3\|$. This implies in particular that $\|\theta_1\|\leq 1/k$ while $\|\theta_2\|\geq 1/k$. Thus by~\eqref{t1t2} with $i=2$ and $j=3$ we have
\[
  \int_{R_2'} \E 1_E |F(\bthet)|^2 \,d\bthet \ll_\eps \int_{R_2'} (k\|\theta_2\|)^{-1.3} \,d\bthet \order k^{-2}.
\]
Again we can bound the integral over the other subregions in the same way, so the integral over $R_2$ is also $\ll_\eps k^{-2}$.

Finally, in the region $R_3$ note that because $\theta_1+\theta_2+\theta_3 = 0$ we must have $\|\theta_i\| < 2/k$ for each $i$. Thus from the trivial bound $|F(\bthet)|\leq 1$ we have
\[
  \int_{R_3} \E 1_E |F(\bthet)|^2 \,d\bthet \leq \int_{R_3} 1 \order k^{-2}.\qedhere
\]
\end{proof}

Recall the definition of $S(I, \X, \Y, \ZZ)$, given in \eqref{s-def}.

\begin{proposition}\label{Sbig}
Let $I=(k^\beta,k]$. There is a constant $c>0$ such that with probability at least $1/2$ we have $S(I,\X,\Y,\ZZ)\subset [-10k,10k]^2$ and $|S(I,\X,\Y,\ZZ)|\geq c k^2$.
\end{proposition}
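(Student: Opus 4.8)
The plan is to establish the two conclusions separately and combine them by a union bound. The containment $S(I,\X,\Y,\ZZ)\subset[-10k,10k]^2$ is an easy first-moment bound: every element of $\sL(I,\X)$ lies in $[0,\sum_{j\in I}jX_j]$, and $\E\sum_{j\in I}jX_j=|I|\le k$, so Markov's inequality gives $\sum_{j\in I}jX_j\le 10k$ with probability at least $9/10$, and likewise for $\Y$ and $\ZZ$. Hence with probability at least $7/10$ all three of $\sL(I,\X),\sL(I,\Y),\sL(I,\ZZ)$ are contained in $[0,10k]$, and then every point $(n_1-n_3,n_2-n_3)$ of $S(I,\X,\Y,\ZZ)$ lies in $[-10k,10k]^2$.

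The lower bound $|S(I,\X,\Y,\ZZ)|\ge ck^2$ is the substantive part, but the analytic work is already done in Corollary~\ref{cor:intexp}; what remains is to upgrade that expectation bound to one holding with probability close to $1$. Fix $\eps\in(0,1/2)$, to be chosen small, and let $E=E(\eps)$ be the event of Lemma~\ref{eventE}, so that $\P(E)\ge 1-\eps$. By \eqref{cauchyparseval} we have, deterministically in $(\X,\Y,\ZZ)$, the inequality $|S(I,\X,\Y,\ZZ)|\ge\bigl(\int_{\T^2}|F(\bthet)|^2\,d\bthet\bigr)^{-1}$, the integral being a finite positive real since $F$ is a trigonometric polynomial with $|F|\le1$ and $F(\mathbf 0)=1$. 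Now Corollary~\ref{cor:intexp} and Tonelli's theorem give $\E\bigl[1_E\int_{\T^2}|F(\bthet)|^2\,d\bthet\bigr]=\int_{\T^2}\E\,1_E|F(\bthet)|^2\,d\bthet\le C_\eps k^{-2}$ for some constant $C_\eps$ depending only on $\eps$, so for any large $t=t(\eps)$ Markov's inequality yields $\P\bigl(1_E\int_{\T^2}|F|^2\,d\bthet>tk^{-2}\bigr)\le C_\eps/t$. Therefore on the event $E\cap\{\int_{\T^2}|F|^2\,d\bthet\le tk^{-2}\}$, which has probability at least $1-\eps-C_\eps/t$, we get $|S(I,\X,\Y,\ZZ)|\ge t^{-1}k^2$; choosing $\eps$ small and then $t=t(\eps)$ large enough makes this probability at least $9/10$, say, with $c=1/t$ an absolute constant.

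Intersecting the two events, both conclusions of the proposition hold simultaneously with probability at least $7/10+9/10-1>1/2$. I do not expect a genuine obstacle here; the only point that needs care is that Corollary~\ref{cor:intexp} controls $\int_{\T^2}\E\,1_E|F|^2$ rather than $\int_{\T^2}\E|F|^2$, so one must restrict to the event $E$ before applying Markov's inequality — but by Lemma~\ref{eventE} this costs only $\eps$ in probability and is harmless once $\eps$ is fixed small.
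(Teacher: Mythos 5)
Your proposal is correct and follows essentially the same route as the paper: the containment via the first-moment/Markov bound (the paper invokes Lemma~\ref{maxS} with $\eps=1/3$ rather than re-deriving it), and the lower bound by restricting to the event $E$ of Lemma~\ref{eventE}, applying Markov's inequality to $1_E\int_{\T^2}|F|^2$ using Corollary~\ref{cor:intexp}, and then invoking \eqref{cauchyparseval}, with a final union bound. Nothing to add.
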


\begin{proof}
Apply Lemma~\ref{eventE} with $\eps = 0.01$, and let $E$ be the resulting event. By Corollary~\ref{cor:intexp} (and interchanging the order of integration and expectation) we have
\[
  \E 1_E \int_{\T^2} |F(\bthet)|^2\,d\bthet \ll k^{-2}.
\]
Thus by Markov's inequality there is a constant $C$ such that $1_E \int_{\T^2} |F(\bthet)|^2\,d\bthet \leq C k^{-2}$ with probability at least $0.99$. Since $\P(E)\geq 0.99$ we deduce that $\int_{\T^2} |F(\bthet)|^2\,d\bthet \leq C k^{-2}$ with probability at least $0.98$. Applying~\eqref{cauchyparseval}, we have $|S(I,\X,\Y,\ZZ)|\geq C^{-1} k^2$ with probability at least $0.98$.

On the other hand, by Lemma~\ref{maxS} with $\eps=1/3$ we have $S(I,\X,\Y,\ZZ)\subset [-10k,10k]^2$ with probability at least $2/3$, so we must have both $S(I,\X,\Y,\ZZ)\subset [-10k,10k]^2$ and $|S(I,\X,\Y,\ZZ)|\geq C^{-1} k^2$ with probability at least $1 - 1/3 - 0.02 \geq 1/2$.
\end{proof}

\begin{proposition}
Let $I = (k^\beta,60k]$. Then with probability bounded away from zero we can find $(x_j)_{j\in I}, (y_j)_{j\in I}, (z_j)_{j\in I}$ not all zero such that $0\leq x_j\leq X_j$, $0\leq y_j\leq Y_j$, and $0\leq z_j\leq Z_j$ for each $j\in I$ and $$\sum_{j\in I} jx_j = \sum_{j\in I} jy_j = \sum_{j\in I} jz_j.$$
\end{proposition}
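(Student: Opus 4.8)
The plan is to decompose the index set as $I=I_1\sqcup I_2$ with $I_1=(k^\beta,k]$ and $I_2=(k,60k]$, to apply Proposition~\ref{Sbig} on $I_1$, and then to ``correct'' the sumset-differences it produces by adjoining a single fresh piece from $I_2$ to each of the three copies. First note that $I_1$ and $I_2$ are disjoint, so the variables $(X_j,Y_j,Z_j)_{j\in I_1}$ are independent of $(X_j,Y_j,Z_j)_{j\in I_2}$. By Proposition~\ref{Sbig} there is an absolute constant $c>0$ and an event $G$, measurable with respect to the first family, with $\P(G)\geq\tfrac12$, on which $S(I_1,\X,\Y,\ZZ)\subseteq[-10k,10k]^2$ and $|S(I_1,\X,\Y,\ZZ)|\geq ck^2$. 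The correction observation is the following. Suppose $(d_1,d_2)\in S(I_1,\X,\Y,\ZZ)$, witnessed by $a\in\sL(I_1,\X)$, $b\in\sL(I_1,\Y)$, $c'\in\sL(I_1,\ZZ)$ with $d_1=a-c'$ and $d_2=b-c'$, and suppose $r$ is an integer with $11k<r\leq50k$ and $X_{r-d_1}\geq1$, $Y_{r-d_2}\geq1$, $Z_r\geq1$. Since $|d_1|,|d_2|\leq10k$ we have $r-d_1,\,r-d_2,\,r\in(k,60k]=I_2$; adjoining the piece $r-d_1$ to an $\X$-representation of $a$, the piece $r-d_2$ to a $\Y$-representation of $b$, and the piece $r$ to a $\ZZ$-representation of $c'$ produces vectors $(x_j)_{j\in I}$, $(y_j)_{j\in I}$, $(z_j)_{j\in I}$ obeying $0\leq x_j\leq X_j$ etc., not all zero, and with $\sum_{j\in I}jx_j=\sum_{j\in I}jy_j=\sum_{j\in I}jz_j=c'+r>0$. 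Hence it suffices to show that, conditionally on $G$ and on the realisation of $S(I_1,\X,\Y,\ZZ)$, such a pair $(d_1,d_2)$ and such an integer $r$ exist with probability — over the $I_2$-randomness — bounded below by an absolute positive constant.

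To do this I would fix an arbitrary set $S\subseteq[-10k,10k]^2\cap\Z^2$ with $|S|\geq ck^2$ (note this also forces $|S|\leq(20k+1)^2$), and let $N$ count the triples $(d_1,d_2,r)$ with $(d_1,d_2)\in S$, $r$ an integer in $(11k,50k]$, and $X_{r-d_1}\geq1$, $Y_{r-d_2}\geq1$, $Z_r\geq1$, where now only $(X_j,Y_j,Z_j)_{j\in I_2}$ is random. Using $\P(X_j\geq1)=1-e^{-1/j}\asymp1/j$ (uniformly for $j\geq1$) and the fact that every index occurring lies in $(k,60k]$, one gets $\E N\geq\gamma_0 c$ for an absolute $\gamma_0>0$. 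For the second moment, expand $\E N^2$ over ordered pairs of triples and split into the eight cases according to which of the coincidences $r-d_1=r'-d_1'$, $r-d_2=r'-d_2'$, $r=r'$ hold; in each case the sum factors — up to at most one harmless factor $\sum_{11k<r\leq50k}\tfrac1r=O(1)$ — into a product of terms of the shape $\sum_{(d_1,d_2)\in S}\tfrac1{(r-d_1)(r-d_2)}\leq|S|k^{-2}\leq(20k+1)^2k^{-2}=O(1)$, so that $\E N^2\ll1$ with an absolute implied constant. Cauchy--Schwarz in the form $\P(N\geq1)\geq(\E N)^2/\E N^2$ then gives $\P(N\geq1)\gg c^2$, uniformly over admissible $S$.

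Combining the two steps, and using independence of $G$ (together with the realisation of $S(I_1,\X,\Y,\ZZ)$) from the $I_2$-randomness, the probability of the event in the proposition is at least $\P(G)\cdot\inf_S\P(N\geq1)\gg c^2$, a positive absolute constant; this settles all large $k$, and the finitely many small $k$ are trivial since the probability is in any case positive. The part that is more than bookkeeping is the estimate $\E N^2\ll1$: one must verify that the diagonal and partially-diagonal contributions do not accumulate, and this is exactly where the crude a priori bound $|S|\leq(20k+1)^2$ — inherited from Lemma~\ref{maxS} inside Proposition~\ref{Sbig} — is needed. Everything else is a routine computation.
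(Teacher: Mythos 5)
Your proposal is correct and follows essentially the same route as the paper: split off $I_1=(k^\beta,k]$, invoke Proposition~\ref{Sbig} there, and then use the independence of the indices in $(k,60k]$ to adjoin one fresh cycle to each of $\X,\Y,\ZZ$ so as to cancel an element of $S(I_1,\X,\Y,\ZZ)$. The only real difference is in the last step, where the paper selects $j_3$, then $j_1$, then $j_2$ sequentially (each succeeding with probability bounded below), while you count all admissible triples $(d_1,d_2,r)$ at once and apply a second-moment (Paley--Zygmund) bound, which indeed works since $\E N\gg c$ and $\E N^2\ll 1$ as you indicate.
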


\begin{proof}
Let $I'=(k^\beta,k]$. By Proposition~\ref{Sbig}, with probability at least $1/2$ we have $$S(I',\X,\Y,\ZZ)\subset[-10k,10k]^2$$ and $|S(I',\X,\Y,\ZZ)|\gg k^2$. This event depends only on $X_j,Y_j,Z_j$ for $j\leq k$, so independently with probability at least $1/2$ we can find ${j_3}\in(20k,50k]$ such that $Z_{j_3}>0$, as
\begin{equation} \label{Zj-computation}
  \P(Z_j = 0~\text{for all}~j\in(20k,50k]) = \prod_{j\in(20k,50k]} e^{-1/j} \leq 1/2.
\end{equation}
Given such a $j_3$ the set $T$ of pairs of integers $(j_1,j_2)$ such that $10k<j_1,j_2\leq 60k$ and for which
$$j_1 (1,0)+j_2 (0,1)-j_3 (1,1) \in - S(I',\X,\Y,\ZZ)$$ has size $|T|\gg k^2$. In particular there is a set $T_1$ of integers $j_1$ in the range $10k < j_1 \leq 60k$ of size $|T_1|\gg k$ such that for each $j_1\in T_1$ there are $\gg k$ integers $j_2$ in the same range $10k<j_2\leq 60k$ such that $(j_1,j_2)\in T$. Thus by two further computations along the lines of~\eqref{Zj-computation}, independently with probability $\gg 1$ we can find $j_1\in T_1$ such that $X_{j_1}>0$, and then $j_2$ such that $(j_1,j_2)\in T$ and such that $Y_{j_2}>0$.

But then by definition of $S(I',\X,\Y,\ZZ)$ we can find $(x_j)_{j\in I'}, (y_j)_{j\in I'}, (z_j)_{j\in I'}$ such that $0\leq x_j\leq X_j$, $0\leq y_j\leq Y_j$, and $0\leq z_j\leq Z_j$ for all $j\in I'$ and such that
\[
  j_1 + \sum_{j\in I'} jx_j = j_2 + \sum_{j\in I'} jy_j = j_3 + \sum_{j\in I'} jz_j.
\]
Thus the proposition follows from putting $x_{j_1}=y_{j_2}=z_{j_3}=1$, and putting all other $x_j,y_j,z_j$ with $j>k$ equal to $0$.
\end{proof}

\begin{corollary}\label{threeint}
$\sL(\X)\cap\sL(\Y)\cap\sL(\ZZ)$ is almost surely infinite.
\end{corollary}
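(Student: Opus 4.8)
The plan is to combine the preceding proposition with the second Borel--Cantelli lemma. For a positive integer $k$ write $I_k=(k^\beta,60k]$, and let $G_k$ be the event supplied by that proposition: the event that there exist $(x_j)_{j\in I_k},(y_j)_{j\in I_k},(z_j)_{j\in I_k}$, not all zero, with $0\leq x_j\leq X_j$, $0\leq y_j\leq Y_j$, $0\leq z_j\leq Z_j$, and $\sum_{j\in I_k}jx_j=\sum_{j\in I_k}jy_j=\sum_{j\in I_k}jz_j$. That proposition gives an absolute constant $\delta>0$ with $\P(G_k)\geq\delta$ for all sufficiently large $k$. Two features of $G_k$ do the rest: it is measurable with respect to $\{X_j,Y_j,Z_j:j\in I_k\}$ alone; and when $G_k$ occurs, the common value $\ell:=\sum_{j\in I_k}jx_j$ lies in $\sL(\X)\cap\sL(\Y)\cap\sL(\ZZ)$, while the fact that the $x_j$ are supported on $I_k$ and not all zero forces $\ell>k^\beta$. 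So each occurrence of $G_k$ certifies an element of the triple intersection exceeding $k^\beta$ in size. (The proof of the proposition actually produces $\ell>10k$, but $\ell>k^\beta$ is all that is needed.)

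Next I would pass to a sufficiently sparse subsequence. Choose integers $k_1<k_2<\cdots$, each large enough that $\P(G_{k_i})\geq\delta$, and growing fast enough that $k_{i+1}^\beta>60k_i$ for every $i$; then the intervals $I_{k_1},I_{k_2},\dots$ are pairwise disjoint. Since all the variables $X_j,Y_j,Z_j$ are independent, disjointness of the index sets makes $G_{k_1},G_{k_2},\dots$ mutually independent, and $\sum_i\P(G_{k_i})\geq\sum_i\delta=\infty$. By the second Borel--Cantelli lemma, almost surely $G_{k_i}$ occurs for infinitely many $i$; on this event $\sL(\X)\cap\sL(\Y)\cap\sL(\ZZ)$ contains an integer larger than $k_i^\beta$ for infinitely many $i$, and as $k_i^\beta\to\infty$ the intersection is infinite.

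I do not anticipate a genuine obstacle, since all the content sits in the preceding proposition. The only points needing care are the remark that $G_k$ depends only on the coordinates indexed by $I_k$ --- which is what lets a lacunary subsequence of the $G_k$ be independent --- and the extraction from $G_k$ of an element of the intersection of size $>k^\beta$ rather than just a nonzero one; both are immediate. One could bypass Borel--Cantelli and instead observe directly that $\P(\text{none of }G_{k_1},\dots,G_{k_N}\text{ holds})\leq(1-\delta)^N$, applying this to arbitrarily long tails of the sequence $(k_i)$ to conclude that the intersection is almost surely unbounded, hence infinite.
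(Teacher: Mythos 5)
Your argument is correct and is essentially the paper's proof: both pass to a lacunary sequence $k_1<k_2<\cdots$ making the intervals $I_{k_i}$ pairwise disjoint, invoke the preceding proposition to get each event with probability bounded below, and use independence (via Borel--Cantelli or the equivalent $(1-\delta)^N$ bound) to conclude. Your added observation that each occurrence yields an element exceeding $k_i^\beta$, so that the intersection is genuinely infinite rather than merely nonempty infinitely often, is a detail the paper leaves implicit but is handled correctly here.
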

\begin{proof}
Define $k_1$ to be sufficiently large, and thereafter $k_{i+1} = (60k_i)^{1/\beta}.$ Then the intervals $I_i=(k_i^\beta,60k_i]$ are pairwise disjoint and by the proposition for each the probability that we can find $(x_j)_{j\in I_i}, (y_j)_{j\in I_i}, (z_j)_{j\in I_i}$ not all zero such that $0\leq x_j\leq X_j$, $0\leq y_j\leq Y_j$, and $0\leq z_j\leq Z_j$ for each $j\in I_i$ and $$\sum_{j\in I_i} jx_j = \sum_{j\in I_i} jy_j = \sum_{j\in I_i} jz_j$$
is bounded away from zero. Since these events are independent for different values of $i$ the corollary follows.
\end{proof}

\begin{proof}[Proof of Proposition~\ref{threegen-spec}]
By Corollary~\ref{threeint} there is some $k_0=k_0(\eps)$ such that if $\sL(\X)\cap\sL(\Y)\cap\sL(\ZZ)\cap[1,k_0]$ is nonempty with probability at least $1-\eps/2$. Thus by Lemma~\ref{Poisson} there is some $n_0=n_0(\eps)$ such that if $n\geq n_0$ then with probability at least $1-\eps$ there is some $\ell\leq k_0$ such that $\pi_1,\pi_2,\pi_3$ each fix a set of size $\ell$.
\end{proof}

\medskip

{\bf Acknowledgments.}  The authors thank the referees for carefully reading
the paper and for several helpful suggestions.

\end{document}